\newtheorem{Lemma}{Lemma}[section]
\newtheorem{Theorem}{Theorem}[section]
\newtheorem{Definition}{Definition}[section]
\newtheorem{Remark}{Remark}[section]
\numberwithin{equation}{section} \allowdisplaybreaks
\def\G{{\mathcal G}}
\def\L{{\bf L}}
\def\D{{\mathcal D}}
\def\P{{\mathcal P}}
\def\dint{\int\!\!\int}
\def\ve{\varepsilon}
\def\vp{\varphi}
\def\Tilde{\widetilde}
\def\C{{\mathcal C}}
\def\N{{\mathcal N}}
\def\Q{{\mathcal Q}}
\def\ds{\displaystyle}
\def\sqr#1#2{\vbox{\hrule height .#2pt
\hbox{\vrule width .#2pt height #1pt \kern #1pt
\vrule width .#2pt}\hrule height .#2pt }}
\def\square{\sqr74}
\def\endproof{\hphantom{MM}\hfill\llap{$\square$}\goodbreak}
\def\bega{\begin{array}}
\def\enda{\end{array}}
\def\begi{\begin{itemize}}
\def\endi{\end{itemize}}
\def\K{{\mathcal K}}
\def\O{{\mathcal O}}
\def\R{I\!\!R}
\def\ov{\overline}
\def\Tilde{\widetilde}
\def\be{\begin{equation}}
\def\beq{\begin{equation}}
\def\bel{\begin{equation}\label}
\def\eeq{\end{equation}}
\def\R{\mathop{\mathbb R\kern 0pt}\nolimits}
\newcommand{\beno}{\begin{eqnarray*}}
\newcommand{\eeno}{\end{eqnarray*}}
\def\vp{\varphi}
\def\({\left(\begin{array}{cccccc}}
\def\){\end{array}\right)}
\def\({\left(\begin{array}{cccccc}}
\def\){\end{array}\right)}
\def\bes{\begin{eqnarray}}
\def\ees{\end{eqnarray}}
\newcommand{\bea}{\begin{eqnarray}}
\newcommand{\eea}{\end{eqnarray}}
\newcommand{\beann}{\begin{eqnarray*}}
\newcommand{\eeann}{\end{eqnarray*}}
\newcommand{\bp}{\begin{proof}}
\newcommand{\ep}{\end{proof}}
\begin{document}
\vskip 0.2cm

\title[Uniqueness of conservative solutions to  nonlinear variational wave equation]{\bf  Uniqueness of conservative solutions to a one-dimensional general quasilinear wave equation through variational principle}

\author[H. Cai]{Hong Cai}
\address{Hong Cai \newline
Department of Mathematics and Research Institute for Mathematics and Interdisciplinary Sciences, Qingdao University of Science and Technology, Qingdao, Shandong, P.R. China, 266061.}
\email{caihong19890418@163.com}

\author[G. Chen]{Geng Chen}
\address{Geng Chen \newline
Department of Mathematics, University of Kansas, Lawrence, KS 66045, USA.}
\email{gengchen@ku.edu}

\author[Y. Du]{Yi Du}
\address{Yi Du \newline
Department of Mathematics, JiNan University, Guangzhou, 510632, P. R. China.}
\email{duyidy@jnu.edu.cn}

\author[Y. Shen]{Yannan Shen}
\address{Yannan Shen \newline
Department of Mathematics, University of Kansas, Lawrence, KS 66045, USA.}
\email{yshen@ku.edu}


\maketitle

\begin{abstract}
{\small In this paper, we prove the uniqueness of energy conservative H\"older continuous weak solution to a general quasilinear wave equation by the analysis of characteristics. This result has no restriction on the size of solutions, i.e. it is a large data result.
}
 \bigbreak
\noindent

{\bf \normalsize Keywords.} {\small Variational wave equations; Conservative solutions; Uniqueness.}

\end{abstract}

\section{Introduction}
\setcounter{equation}{0}

Consider a class of hyperbolic system of nonlinear wave equations that are derived from a variational principle whose action is a quadratic function of the derivatives of the field with coefficients depending on the field and the independent variables
\begin{equation}\label{field}
\delta\int A_{\mu\nu}^{ij}(\mathbf{x},u)\frac{\partial u^\mu}{\partial x_i}\frac{\partial u^\nu}{\partial x_j}d\mathbf{x}=0,
\end{equation}
where we use the summation convention. Here ${\bf x}\in \mathbb{R}^{d+1}$ are the space-time variables and $u:\mathbb{R}^{d+1}\to {\mathbb R}^n$ are the dependent variables. In this paper, we always assume the coefficients $A_{\mu\nu}^{ij}:\mathbb{R}^{d+1}\times \mathbb{R}^n\to \mathbb{R}$ are smooth and satisfy $A_{\mu\nu}^{ij}=A_{\nu\mu}^{ij}=A_{\mu\nu}^{ji}$.
The Euler-Lagrange equations associated with \eqref{field} are
\begin{equation}\label{EulerL}
\frac{\partial}{\partial x_i}\Big(A_{k\mu}^{ij}\frac{\partial u^\mu}{\partial x_j}\Big)=\frac{1}{2}\frac{\partial A_{\mu\nu}^{ij}}{\partial u^k}\frac{\partial u^\mu}{\partial x_i}\frac{\partial u^\nu}{\partial x_j}.
\end{equation}

System \eqref{field} has various physical backgrounds. See \cite{AH2007} for some backgrounds of this general system. In particular, system \eqref{field} has direct applications on nematic liquid crystals \cite{AH, BZ}, which will be introduced in the next part. System \eqref{field} is also realted to the $O(3)\ \sigma$-model, see the introduction of \cite{CCS}. Here the $O(3)\ \sigma$-model has applications on many physical areas, including the general relativity and Yang-Mills fields, \cite{RS}.

A particular physical example leading to \eqref{field} is the motion of a massive director field in a nematic liquid crystal. More precisely, a nematic crystal can be described, when we
ignore the motion of the fluid, by a director field of unit vectors $\mathbf{n}\in\mathbb{S}^2$ describing the orientation of rod-like molecules.
In the regime in which inertia effects
dominate viscosity, the propagation of the orientation waves in the director field is modeled by the least action principle (Saxton \cite{S})
\begin{equation}\label{ofdelta}
\delta\int\Big(\partial_t \mathbf{n}\cdot \partial_t \mathbf{n}-W(\mathbf{n},\nabla\mathbf{n}) \Big)\,d\mathbf{x}\,dt=0,\quad \mathbf{n}\cdot\mathbf{n}=1,
\end{equation}
where $W(\mathbf{n},\nabla\mathbf{n})$ is the well-known Oseen-Franck potential energy density,
\begin{equation*}
W(\mathbf{n},\nabla\mathbf{n})=K_1|\mathbf{n}\times(\nabla\times\mathbf{n})|
+K_2(\nabla \cdot \mathbf{n})^2+K_3(\mathbf{n}\cdot\nabla\times\mathbf{n})^2.
\end{equation*}
Here the positive constants $K_1, K_2$ and $K_3$ are elastic constants of the liquid crystal. Since $W(\mathbf{n},\nabla\mathbf{n})$ is a quadratic function of $\nabla\mathbf{n}$, with coefficients depending on $\mathbf{n}$, this variational principle is of the form \eqref{field}.

The simplest class of solutions for orientation waves in \eqref{ofdelta} of planar deformations depending on a single space variable $x$. The director field then has the special form
$
\mathbf{n}=\cos u(t,x)\mathbf{e}_x+\sin u(t,x)\mathbf{e}_y,
$
where the dependent variable $u \in \mathbb{R}$ measures the angle of director field to $x$-direction, and $\mathbf{e}_x$ and $\mathbf{e}_y$ are the coordinate vectors in $x$ and $y$ directions, respectively. In this case
the functional $W(\mathbf{n},\nabla\mathbf{n})$ vastly simplifies to $W(\mathbf{n},\nabla\mathbf{n})=(K_1\cos^2u+K_2 \sin^2u)u_x^2$
and $|\mathbf{n}_t|^2=u_t^2$.
Then the dynamics are described by the variational principle
\begin{equation*}
\delta \int[u_t^2-c^2(u)u_x^2]\,dx\,dt=0,
\end{equation*}
with the wave speed $c$ given by $c^2(u)=K_1\cos^2 u+K_2\sin^2 u$.
Thus, the Euler-Lagrange equation for this variational principle results one representative example of variational wave equation
\begin{equation}\label{vwe}
u_{tt}-c(u)[c(u)u_x]_x=0.
\end{equation}
Because of strong nonlinearity, the solution for the initial value problem of \eqref{vwe} generically forms finite time cusp singularity \cite{BC,BHY,GHZ}. Hence, in general, we have to consider weak solutions, such as the energy conservative ($H^1$) solution considered in this paper. The low regularity makes the study on the global well-posedness very difficult.

Currently, the global well-posedness of conservative solution for
\eqref{vwe} has been fairly well understood after a sequence of papers.
The global existence of H\"older continuous energy conservative solution was established by Bressan and Zheng in \cite{BZ}. Later this result was extended to more general initial data in \cite{HR}, the case with damping in \cite{CZ12} and the variational wave system \eqref{ofdelta} with ${\bf n}\in{\mathbb S}^2$ in  \cite{CZZ12,ZZ10,ZZ11}. Also see the existence of dissipative solution with monotonic wave speed $c(\cdot)$ in \cite{BH,ZZ03}.

To select a unique solution after singularity formation, one needs to add an additional admissible condition, such as the energy conservative condition. In \cite{BCZ}, uniqueness of energy conservative solution has been established by Bressan, Chen and Zhang. Furthermore, Bressan and Chen in \cite{BC} proved a generic regularity result, which serves as a key part in the study of Lipschitz continuous dependence later in \cite{BC2015} by Bressan and Chen, where the solution flow was proved to be Lipschitz continuous on a new Finsler type optimal transport metric. In fact, the solution flow fails to be Lipschitz continuous under existing metrics, such as Sobolev metrics or Wasserstein metrics.
Later, the uniqueness result has been extended to  systems \eqref{ofdelta} in \cite{CCD}. 

\vskip 0.3cm
In this paper, we are interested in a more general model of variational wave equation: \eqref{field} with  $n=1$ and $d=1$. Then the Euler-Lagrange equation \eqref{EulerL} reads that
\begin{equation}\label{EL}
(A^{11}u_t+A^{12}u_x)_t+(A^{12}u_t+A^{22}u_x)_x=\frac{1}{2}
\big(\frac{\partial A^{11}}{\partial u}u_t^2+2\frac{\partial A^{12}}{\partial u}u_t u_x+\frac{\partial A^{22}}{\partial u}u_x^2\big).
\end{equation}
Moreover, we assume the coefficients in \eqref{EL} satisfy
\begin{equation*}
(A^{ij})_{2\times 2}=\left(
  \begin{array}{cc}
   \alpha^2 & \beta \\
   \beta & -\gamma^2 \\
\end{array}
\right)(x,u),
\end{equation*}
then equation \eqref{EL} exactly gives the following nonlinear variational wave equation, on which we focus in this paper
\begin{equation}\label{vwl}
(\alpha^2 u_t+\beta u_x)_t+(\beta u_t-\gamma^2 u_x)_x=\alpha\alpha_u u_t^2+\beta_u u_tu_x-\gamma\gamma_u u_x^2.
\end{equation}
Here the variable $t\geq 0$ is time, and $x\in\mathbb{R}$ is the spatial coordinate.
We consider the initial data satisfying
\begin{equation}\label{ID}
u(0,x)=u_0(x)\in H^1,\quad
u_t(0,x)=u_1(x)\in L^2.
\end{equation}

Still due to the singularity formation, for the general case, one needs to consider weak solutions. The initial condition \eqref{ID} is corresponding to the finite initial energy case. Because the solution has finite speed of propagation,  \eqref{ID} is the most interesting general initial data.

The coefficients $\alpha=\alpha(x,u),\beta=\beta(x,u),\gamma=\gamma(x,u)$ are smooth functions on $x$ and $u$, satisfying that, for any $z=(x,u)\in\mathbb{R}^2$, there exist positive constants $\alpha_1,\alpha_2,\beta_2,\gamma_1$ and $\gamma_2$, such that
\begin{equation}\label{con}
\begin{cases}
0<\alpha_1\leq \alpha(z)\leq\alpha_2,~~|\beta(z)|\leq \beta_2,~~0<\gamma_1\leq\gamma(z)\leq\gamma_2,\\
\displaystyle\sup_z\{|\nabla\alpha(z)|,|\nabla\beta(z)|,|\nabla\gamma(z)|\}<\infty.
\end{cases}\end{equation}
In this paper, subscripts $t$, $x$ or $u$ represent partial derivatives with respect to $t$, $x$ or $u$.
Then system \eqref{vwl} is strictly hyperbolic with two eigenvalues
\begin{equation}\label{lambda}
\lambda_{-}:=\frac{\beta-\sqrt{\beta^2+\alpha^2\gamma^2} }{\alpha^2}<0,\qquad
\lambda_{+}:=\frac{\beta+\sqrt{\beta^2+\alpha^2\gamma^2} }{\alpha^2}>0.
\end{equation}
We will always call waves in the families of $\lambda_-$ and $\lambda_+$ as backward and forward waves, respectively. By \eqref{con}, $-\lambda_{-}$ and $\lambda_{+}$ are both smooth, bounded and uniformly positive.

It is easy to see that the equation \eqref{vwl} is a general quasilinear wave equation including \eqref{vwe} as an example, when $\alpha=1, \beta=0$ and $\gamma=c(u)$.

Solutions of \eqref{vwl}--\eqref{ID} may form finite time cusp singularity, see examples in \cite{BC,BHY,GHZ}. The existence of global-in-time energy conservative H\"older continuous (weak) solution has been established by Hu in \cite{H}, applying the transformation of coordinates method first used in \cite{BZ}.
In this paper, we would like to address the issue of uniqueness for conservative solution to the nonlinear variational wave equation \eqref{vwl}--\eqref{ID}. Our proof is based on the framework established by Bressan, Chen and Zhang in \cite{BCZ}. For \eqref{vwl}, we face a much more involved case than the variational wave equation \eqref{vwe}. As a result, many estimates and constructions in this paper are considerably complicated.

Some estimates in this paper will also serve as crucial preparations in another paper \cite{CCS} addressing the Lipschitz continuous dependence of solution. In fact, in \cite{CCS}, we will construct a new distance  which renders Lipschitz continuous the solution flow of \eqref{vwl} constructued in \cite{H}, although the solution does not depend continuously on the initial data with respect to the natural Sobolev space corresponding to energy. Because the uniqueness result in the current paper rules out the possibility of constructing a different solution through any method other than the one used in \cite{H}, one can fairly well concludes the global well-posedness of conservative solution of  \eqref{vwl}--\eqref{ID} by the current paper and \cite{CCS,H}.

The outline of the paper is as follows. In Section 2, after reviewing the existence result of a conservative solution to \eqref{vwl}--\eqref{ID}, the main uniqueness result in this paper will be introduced. In Section 3,
we study the existence and uniqueness of characteristic in each direction. In Section 4, with some other auxiliary
variables introduced associated to a given conservative solution, we prove that these variables satisfy a particular semi-linear system. Then the uniqueness of conservative solution in the original variable $u$ will be concluded.

\section{Main result}\label{sec_mainresult}
In this section, we review the global existence of an energy-conservative weak solution to the Cauchy problem \eqref{vwl}--\eqref{ID}, c.f. \cite{H}. Our main uniqueness
result will be stated at the end of this section.

\subsection{Existing existence result}\label{sec_rew}
\setcounter{equation}{0}
In this part, we recall that the problem \eqref{vwl}--\eqref{ID} has a weak solution which conserves the total energy. For more details, the readers can refer to \cite{H}.

We first review the global existence theorem in \cite{H}. One can also find the definition of weak solution inside this theorem.
\begin{Theorem} [Global existence \cite{H}] \label{thm_ex}
Let the condition \eqref{con} be satisfied, then the Cauchy problem \eqref{vwl}--\eqref{ID} admits a global {\bf weak solution} $u=u(t,x)$ defined for all $(t,x)\in\mathbb{R}^+\times\mathbb{R}$, as follows:
\begin{itemize}
\item[(i)] In the $t$-$x$ plane, the function $u(t,x)$ is locally H\"older continuous
with exponent $1/2$.  The function $t\mapsto u(t,\cdot)$ is
continuously differentiable as a map with values in $L^p_{\rm
loc}$, for all $1\leq p<2$. Moreover, it is Lipschitz continuous with respect to
(w.r.t.)~the $L^2$ distance, that is, there exists a constant $L$ such that
\begin{equation*}
 \big\|u(t,\cdot)-u(s,\cdot)\big\|_{L^2}
 \leq L\,|t-s|,
\end{equation*}
for all $t,s\in\mathbb R^+$.
\item[(ii)] The function $u(t,x)$ takes on the initial conditions in (\ref{ID})
pointwise, while their temporal derivatives hold in  $L^p_{\rm
loc}\,$ for $p\in [1,2)\,$.
\item[(iii)] The equations (\ref{vwl}) hold in distributional sense, that is
\begin{equation*}\label{wevwl}
\int\int\big[\varphi_t(\alpha^2 u_t+\beta u_x)+\varphi_x(\beta u_t-\gamma^2 u_x)+\varphi(\alpha \alpha_u u_t^2+\beta_u u_t u_x-\gamma\gamma_u u_x^2)\big]\,dx\,dt=0
\end{equation*}
for all test function $\varphi\in
C^1_c(\mathbb R^+\times \mathbb R)$.
\end{itemize}
\end{Theorem}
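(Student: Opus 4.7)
The plan is to follow the Bressan--Zheng transformation-of-coordinates method (originally developed in \cite{BZ} for the scalar variational wave equation \eqref{vwe}) and adapt it to the general system \eqref{vwl}. The central idea is that although \eqref{vwl} is quasilinear and its gradient generically blows up in finite time, in coordinates adapted to the two characteristic families of \eqref{lambda} the equation becomes a semilinear system with bounded, locally Lipschitz right-hand side, for which global existence follows by standard fixed-point arguments.

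First I would introduce Riemann-type characteristic derivatives
\[ R := \alpha\, u_t - \alpha\lambda_-\, u_x, \qquad S := \alpha\, u_t - \alpha\lambda_+\, u_x, \]
chosen so that the conserved energy density decomposes, up to bounded multiplicative factors built from $\alpha,\beta,\gamma$, as $R^2+S^2$. A direct computation using \eqref{vwl} shows that along the forward characteristic $dx/dt=\lambda_+$ the variable $R$ satisfies an ODE of the form
\[ R_t + \lambda_+ R_x = a_{11}(x,u)R^2 + a_{12}(x,u)RS + a_{22}(x,u)S^2 + b_1(x,u)R + b_2(x,u)S, \]
with a symmetric relation for $S$ along $dx/dt=\lambda_-$; the quadratic terms are what cause finite-time blow-up in $(t,x)$ coordinates. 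To resolve them, I introduce new independent variables $(X,Y)$ by
\[ X_t + \lambda_+ X_x = 0, \qquad Y_t + \lambda_- Y_x = 0, \]
with initial data $X(0,x)=\int_0^x(1+R^2(0,s))\,ds$ and $Y(0,x)=-\int_0^x(1+S^2(0,s))\,ds$, so that the singular portion of the energy is absorbed into the Jacobian of the coordinate change.

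In the $(X,Y)$ plane, the natural dependent variables are $u$, $x$ itself, the Jacobian factors $x_X$ and $-x_Y$, and the angular substitutions $\xi=2\arctan R$, $\eta=2\arctan S$, which render all trigonometric functions of $\xi,\eta$ bounded. A lengthy but routine computation rewrites \eqref{vwl} as a first-order semilinear system for $(u,x,x_X,x_Y,\xi,\eta)$ whose right-hand side is bounded and locally Lipschitz under the structural assumption \eqref{con}. Global existence on any bounded rectangle in $(X,Y)$ then follows by Picard iteration, and one pushes the solution back to $(t,x)$ coordinates. Part (i) of the theorem follows because energy conservation yields $u_t,u_x\in L^\infty(\mathbb R^+;L^2_{\rm loc})$, so $u(t,\cdot)\in H^1_{\rm loc}$ embeds into $C^{1/2}_{\rm loc}$ in $x$, while the uniform $L^2$ bound on $u_t$ gives Lipschitz continuity of $t\mapsto u(t,\cdot)$ in $L^2$; parts (ii) and (iii) are verified by tracing initial data and test functions through the change of variables. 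The main technical obstacle is ensuring that the coordinate map $(t,x)\mapsto(X,Y)$ admits a well-defined continuous inverse even across times at which the classical gradient blows up: this reduces to showing that $x_X\ge 0$ and $-x_Y\ge 0$ hold globally in the new coordinates, with degeneracy confined to a null set, so that $u$ remains single-valued as a function of $(t,x)$.
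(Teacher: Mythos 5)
A preliminary remark: Theorem \ref{thm_ex} is not proved in this paper at all; it is quoted from Hu \cite{H}, and the paper only records that \cite{H} obtains it by the transformation-of-coordinates method of Bressan and Zheng \cite{BZ}. Your proposal is an outline of exactly that method, so at the level of strategy you are reproducing the cited proof rather than offering an alternative route.

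There is, however, a concrete error in your setup that would derail the computation for the general equation \eqref{vwl}. Since the principal part of \eqref{vwl} is $\alpha^2u_{tt}+2\beta u_{tx}-\gamma^2u_{xx}=\alpha^2(\partial_t+\lambda_+\partial_x)(\partial_t+\lambda_-\partial_x)u$, the gradient variables that diagonalize the system are $R=\alpha u_t+\alpha\lambda_+u_x=\alpha u_t+c_2u_x$, which is transported along the \emph{backward} characteristic $\dot x=\lambda_-$, and $S=\alpha u_t+\alpha\lambda_-u_x=\alpha u_t+c_1u_x$, transported along the forward one; see \eqref{R-S-eqn}. Your choices $R=\alpha u_t-\alpha\lambda_-u_x$ and $S=\alpha u_t-\alpha\lambda_+u_x$ agree with these only when $\lambda_+=-\lambda_-$, i.e.\ when $\beta\equiv 0$ (the special case \eqref{vwe}); for $\beta\neq 0$ they are not directional derivatives along characteristics, the transport equations do not close (uncancelled second-order terms survive), and the asserted Riccati-type equations for $R$ and $S$ are false. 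You have also attached $R$ to the wrong characteristic family. Relatedly, the energy density is not comparable to $R^2+S^2$ in the naive way you suggest: the conserved densities are $\Tilde{R}^2=\frac{-c_1}{c_2-c_1}R^2$ and $\Tilde{S}^2=\frac{c_2}{c_2-c_1}S^2$ (with weights bounded above and below thanks to \eqref{con} and \eqref{not2}), and it is $1+\Tilde{R}^2$ and $1+\Tilde{S}^2$ that must enter the definition of the new coordinates if the Jacobian is to absorb energy concentration. Once these definitions are corrected, the remainder of your outline (arctangent substitution, semilinear system in $(X,Y)$ with bounded locally Lipschitz right-hand side, Picard iteration, and the signs of $x_X$ and $x_Y$ controlling invertibility off a null set) is the standard \cite{BZ}/\cite{H} argument and matches what the paper takes as its starting point.
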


Denote wave speeds as
$$c_1:=\alpha\lambda_-<0,\quad {\rm and}\quad c_2:=\alpha\lambda_+>0,$$
 and the Riemann variables as
\begin{equation*}\label{R-S}
 R:=\alpha u_t+c_2 u_x,\quad S:=\alpha u_t+c_1 u_x.\end{equation*}
By \eqref{lambda}, the wave speeds satisfy that $-c_1(x,u)$ and  $c_2(x,u)$ are both smooth on $x$ and $u$, bounded and uniformly positive.
For a smooth solution of  (\ref{vwl}), the variables $R$ and $S$ satisfy
\begin{equation} \label{R-S-eqn}
\begin{cases}
\alpha(x,u) R_t+c_1(x,u)R_x=a_1 R^2-(a_1+a_2)RS+a_2 S^2+c_2bS-d_1 R,\\
\alpha(x,u)  S_t+c_2(x,u)S_x=-a_1 R^2+(a_1+a_2)RS-a_2 S^2+c_1bR-d_2 S,\\
\displaystyle u_t=\frac{c_2S-c_1R}{\alpha(c_2-c_1)} \quad{\text or} \quad u_x=\frac{R-S}{c_2-c_1}.
 \end{cases}
 \end{equation}
 where
\begin{equation*}
\begin{split}
&a_i=\frac{c_i\partial_u\alpha-\alpha\partial_u c_i}{2\alpha(c_2-c_1)},\quad b=\frac{\alpha\partial_x(c_1-c_2)+(c_1-c_2)\partial_x\alpha}{2\alpha(c_2-c_1)},\\
&d_i=\frac{c_2\partial_x c_1-c_1\partial_x c_2}{2(c_2-c_1)}+\frac{\alpha\partial_x c_i-c_i\partial_x\alpha}{2\alpha},\quad(i=1,2),
\end{split}
\end{equation*}
and $\partial_x$ and $\partial_u$ denote partial derivatives with respect to $x$ and $u$, respectively.

Multiplying the first equation in \eqref{R-S-eqn} by $R$ and the second one by $S$, one has the balance laws for energy densities in two directions, namely
\begin{equation}\label{balance}
\begin{cases}
\displaystyle(\Tilde{R}^2)_t+(\frac{c_1}{\alpha}\Tilde{R}^2)_x=G,\\
\displaystyle(\Tilde{S}^2)_t+(\frac{c_2}{\alpha}\Tilde{S}^2)_x=-G,
\end{cases}
\end{equation}
where \begin{equation*}
\begin{split}
&\Tilde{R}^2=\frac{-c_1}{c_2-c_1}R^2,\quad \Tilde{S}^2=\frac{c_2}{c_2-c_1}S^2,\quad{\rm and}\\
&G=\frac{2c_2a_1}{\alpha(c_2-c_1)}R^2 S-\frac{2c_1a_2}{\alpha(c_2-c_1)}RS^2-\frac{2c_1c_2b}{\alpha(c_2-c_1)}RS,
\end{split}\end{equation*}
which indicates the following conserved quantities
\begin{equation*}
\alpha^2 u_t^2+\gamma^2u_x^2=\Tilde{R}^2+\Tilde{S}^2.\end{equation*}

Now we define the energy conservation as following.

\begin{Definition}[Energy conservation \cite{H}]\label{def_ec}
The weak solution defined in Theorem \ref{thm_ec} is {\bf energy conserved} (or {\bf conservative}), if
there exist two  families of positive Radon measures
on the real line: $\{\mu_-^t\}$ and $\{\mu_+^t\}$, depending continuously
on $t$ in the weak topology of measures, with the following properties.
\begi
\item[(i)] At every time $t$ one has
\begin{equation*}
\mu_-^t(\mathbb {R})+\mu_+^t(\mathbb {R})~=~E_0:=~
\int_{-\infty}^\infty \Big[\alpha^2\big(x,u_0(x)\big) u_1^2(x)+ \gamma^2\big(x,u_0(x)\big) u_{0,x}^2(x) \Big]\, dx \,,\end{equation*}

\item[(ii)] For each $t$, the absolutely continuous parts of $\mu_-^t$ and
$\mu_+^t$ with respect to the Lebesgue measure
have densities  respectively given  by
\begin{equation*}
\Tilde{R}^2=\frac{-c_1}{c_2-c_1}(\alpha u_t+c_2 u_x)^2,\qquad
\Tilde{S}^2=\frac{c_2}{c_2-c_1}(\alpha u_t+c_1 u_x)^2.
\end{equation*}
\item[(iii)]  For almost every $t\in\mathbb {R}^+$, the singular parts of $\mu^t_-$ and $\mu^t_+$
are concentrated on the set where $\partial_u \lambda_-=0$ or $\partial_u \lambda_+=0$.

\item[(iv)] The measures $\mu_-^t$ and $\mu_+^t$ provide measure-valued solutions
respectively to the balance laws
\bel{mbl}
\begin{cases}
w_t + (\frac{c_1}{\alpha}w)_x  = \frac{2c_2a_1}{\alpha(c_2-c_1)}R^2 S-\frac{2c_1a_2}{\alpha(c_2-c_1)}RS^2-\frac{2c_1c_2b}{\alpha(c_2-c_1)}RS, \\
z_t + (\frac{c_2}{\alpha}z)_x  =  - \frac{2c_2a_1}{\alpha(c_2-c_1)}R^2 S+\frac{2c_1a_2}{\alpha(c_2-c_1)}RS^2+\frac{2c_1c_2b}{\alpha(c_2-c_1)}RS.
\end{cases}
\eeq
\endi
\end{Definition}

Then another main result proved in \cite{H}, on energy conservation can be stated as follows.

\begin{Theorem}[Energy conservation \cite{H}]\label{thm_ec}
 Let the condition \eqref{con} be satisfied, then the solutions $u(t,x)$
constructed in Theorem \ref{thm_ex} are conservative in the sense of Definition \ref{def_ec}.
\end{Theorem}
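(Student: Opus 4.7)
The plan is to adapt the transformation-of-coordinates strategy introduced by Bressan and Zheng for \eqref{vwe}, and extended in \cite{H} to the present system. The underlying difficulty is that the balance laws \eqref{balance} are only formal: the energy densities $\tilde R^2$ and $\tilde S^2$ may concentrate into point masses precisely when a cusp forms, so \eqref{balance} has to be reinterpreted in a measure-valued sense, and the measures $\mu_\pm^t$ must be constructed directly rather than read off from the weak solution $u$.

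First, to handle the potential blow-up of the Riemann invariants, I would introduce bounded trigonometric substitutes $w := 2\arctan R$ and $z := 2\arctan S$. Next, I would pass to characteristic coordinates $(X,Y)$ defined so that $X$ is constant along forward characteristics $\dot x = c_2/\alpha$ and $Y$ along backward characteristics $\dot x = c_1/\alpha$, with the normalization $X_x - (c_1/\alpha) X_t \propto 1 + \tilde R^2$ and analogously for $Y$. A direct but lengthy computation shows that, in the $(X,Y)$-plane, the system for $(u, w, z, t, x)$ derived from \eqref{R-S-eqn} becomes semilinear with a globally Lipschitz right-hand side: cubic terms such as $R^2 S$ acquire factors $\cos^2(w/2)\cos(z/2)$ coming from $\sin^2(w/2)/R^2$ etc., and the extra $x$-dependence in $\alpha,\beta,\gamma$ contributes only bounded lower-order sources. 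Global existence of a unique Lipschitz solution in the $(X,Y)$ chart then follows from a standard Picard iteration, after which one inverts the map $(X,Y) \mapsto (t,x)$ to recover $u(t,x)$.

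With the $(X,Y)$-solution in hand, I would construct the measures $\mu_\pm^t$ by pushforward. For each fixed $t \geq 0$, the level set $\{\tau(X,Y) = t\}$ is a Lipschitz curve in the $(X,Y)$-plane parametrized by either $X$ or $Y$; define $\mu_-^t$ as the pushforward of $\sin^2(w/2)\,dX$ along this curve under $(X,Y) \mapsto x(t,X,Y)$, and $\mu_+^t$ analogously with $\sin^2(z/2)\,dY$. Property (i) of Definition \ref{def_ec} reduces to checking that $\sin^2(w/2)\,dX + \sin^2(z/2)\,dY$ is a conserved one-form on the $(X,Y)$-plane, which follows from the semilinear equations for $w$ and $z$. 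Property (ii) is a Jacobian computation on the set where $x_X, x_Y \ne 0$. Property (iv) is obtained by testing \eqref{mbl} against a test function $\varphi(t,x)$ pulled back to $(X,Y)$ and using the transport equations there.

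The main obstacle is property (iii), i.e.\ showing that the singular parts of $\mu_\pm^t$ are supported on $\{\partial_u \lambda_\pm = 0\}$. Concentrated mass arises precisely where the Jacobian of $(X,Y) \mapsto (t,x)$ degenerates, which in our normalization corresponds to $x_X = 0$ (respectively $x_Y = 0$). From the ODE satisfied by $x_X$ along forward characteristics in $(X,Y)$, one sees that $x_X$ can vanish only when $w \to \pm\pi$, i.e.\ $R \to \pm \infty$; tracing through the transport equation for $w$ shows that this collapse forces $\partial_u \lambda_- = 0$ at the collapsing point, and symmetrically for $Y$. Finally, weak continuity of $t \mapsto \mu_\pm^t$ follows from Lipschitz continuity of the solution in $(X,Y)$ together with dominated convergence. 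Since \cite{H} already carries out this program in detail, I would cite it for the technical verifications and emphasize only the structural modifications forced by the non-constant $\alpha, \beta$ and the explicit $x$-dependence.
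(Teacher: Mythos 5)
The paper gives no proof of Theorem \ref{thm_ec}: it is imported verbatim from \cite{H}, whose construction is precisely the Bressan--Zheng coordinate-transformation program you outline (bounded arctangent variables, a semilinear system with Lipschitz right-hand side in characteristic coordinates, measures defined by pushforward along the level sets of $t$), so your sketch follows essentially the same route as the paper's cited argument. The only adjustments a full write-up would need are normalizations --- e.g.\ the bounded variable must be adapted to $\Tilde{R}=\sqrt{-c_1/(c_2-c_1)}\,R$ rather than $R$ itself so that the pushforward of $\sin^2(w/2)\,dX$ reproduces the density $\Tilde{R}^2$ demanded by Definition \ref{def_ec}(ii), and the roles of $X$ and $Y$ are swapped relative to the convention of Section \ref{sub_3.5} --- neither of which affects the validity of the approach.
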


Theorem \ref{thm_ec} implies that for  the above conservative weak solutions, the total energy represented by the sum $\mu_-+\mu_+$ is
conserved in time. This energy may only be concentrated
on a set of zero measure or at points where $\partial_u\lambda_-$ or $\partial_u\lambda+$ vanishes. In particular, if $\partial_u \lambda_\pm\not= 0$
for any $(x,u)$, then the set
$$\Big\{\tau;~E(\tau):=~\int_{-\infty}^\infty\Big[|
\alpha^2\big(x,u(\tau,x)\big) u_t^2(\tau,x)+
\gamma^2\big(x,u(\tau,x)\big) {u}_x^2(\tau,x)
\Big]\, dx  ~<~E_0\Big\}$$
has measure zero.

\subsection{Main result of this paper}
The goal of present paper is to understand whether the conservative solution to \eqref{vwl}--\eqref{ID} is unique. The result is stated below.
\begin{Theorem}[Uniqueness]\label{thm_un}
Let the condition \eqref{con} be satisfied,
the conservative weak solution, defined in Theorem \ref{thm_ex} and Definition \ref{def_ec}, to the Cauchy problem (\ref{vwl})--(\ref{ID}) is unique.
\end{Theorem}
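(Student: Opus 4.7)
The plan is to adapt the framework of Bressan--Chen--Zhang \cite{BCZ} developed for the simpler equation \eqref{vwe}. The overarching strategy is: for any given conservative solution $u$, construct the two families of characteristics associated with the eigenvalues $\lambda_\pm$, introduce carefully chosen auxiliary variables along these curves, and show that all these variables satisfy a \emph{semi-linear} ODE system with Lipschitz right-hand side. Since the initial values of the auxiliary variables are determined uniquely by the data $(u_0,u_1)$, ODE uniqueness then forces all such variables to agree for any two conservative solutions sharing the same data, which in turn yields uniqueness of $u$ itself.

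The first technical step is the content of Section 3: because $u$ is only $C^{1/2}$ and $R,S \in L^2$, one cannot apply classical ODE theory directly to $\dot x = \lambda_\pm(x,u(t,x))$. Instead, following the BCZ philosophy, I would define a characteristic as a Lipschitz curve whose traversal records the correct energy flux in the measure-valued balance laws \eqref{mbl}. Existence follows from a compactness argument on mollified approximations; uniqueness is obtained by sandwiching any candidate characteristic between a maximal and a minimal one and showing, via the conservation of $\mu_\pm^t$ and a Gronwall-type estimate, that the two extremal curves coincide.

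With the characteristics in hand, I would change to new independent variables $(X,Y)$ that absorb the cumulative forward and backward energy along characteristics, so that concentration points of $\mu_\pm^t$ are resolved. I would then introduce bounded renormalized auxiliary dependent variables, such as $\mathbf{w} = 2\arctan R$, $\mathbf{z} = 2\arctan S$, together with energy densities $p = (1+R^2)x_X$, $q = (1+S^2)y_Y$, along with $u$, $x$, $y$ themselves. Using \eqref{R-S-eqn} and \eqref{balance}, one computes that $(u,\mathbf{w},\mathbf{z},p,q,x,y)$ satisfies a semi-linear system in $(X,Y)$ whose coefficients are smooth bounded functions of $(u,x,\mathbf{w},\mathbf{z})$ and depend linearly on $p,q$. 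Standard ODE theory for this system yields uniqueness of the transformed solution, and translating back through the well-defined inverse map $(X,Y)\mapsto(t,x)$ delivers uniqueness of $u$.

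I expect the main obstacles to stem from the coupling term $\beta$ and the genuine $(x,u)$-dependence of the coefficients. Relative to \eqref{vwe}, the source $G$ in \eqref{balance} now contains three cubic interaction terms plus linear sources through $d_1,d_2,b$, and the right-hand sides of \eqref{R-S-eqn} mix $R^2,\,S^2,\,RS$ and genuine linear terms, rather than the clean Riccati structure available in \cite{BCZ}. Ensuring that the semi-linear system in $(X,Y)$ really has Lipschitz right-hand side, in particular that the off-diagonal $\beta$ contributions and $d_i,b$ sources do not destroy the boundedness of $p,q$ or of $u_x$ normalized by the energy measure, will require delicate cancellations. Similarly, the characteristic-uniqueness step is more subtle than in \cite{BCZ}: because $\lambda_\pm$ depends explicitly on $x$, the extremal-characteristic argument must simultaneously control the $u$-variation and the $x$-variation along the curve, which I anticipate to be the most technical part of the argument.
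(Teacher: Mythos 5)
Your overall architecture coincides with the paper's: both follow the Bressan--Chen--Zhang framework, first proving that the characteristics selected by the measure-valued balance laws are unique, then passing to characteristic coordinates $(X,Y)$, deriving a semi-linear system with locally Lipschitz right-hand side, and transferring ODE uniqueness back to $u$. Your auxiliary variables $2\arctan R$, $(1+R^2)x_X$, etc.\ are, up to replacing $R^2$ by the weighted density $\widetilde R^2=\frac{-c_1}{c_2-c_1}R^2$ (a normalization that does matter here, since it is $\widetilde R^2,\widetilde S^2$ and not $R^2,S^2$ that are the densities of $\mu_\pm^t$), smooth reparametrizations of the paper's $\sigma=1/(1+\widetilde R^2)$, $\xi=R/(1+\widetilde R^2)$, $p$, $q$, so the second half of your plan is sound in outline, and you correctly identify the extra interaction terms coming from $\beta$, $b$, $d_i$ as the main computational burden.

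The genuine gap is in your mechanism for uniqueness of characteristics. Sandwiching a candidate curve between maximal and minimal solutions cannot work on the bare ODE $\dot x=\lambda_-(x,u(t,x))$: after energy concentration the extremal solutions genuinely differ --- that is precisely the non-uniqueness being fought --- so the sandwiching would have to be performed inside the class of curves that additionally satisfy the balance law \eqref{ceq1}, and it is not clear that this class is closed under pointwise sup/inf or admits extremal elements at all. The paper instead reparametrizes the characteristic by the energy variable $\omega$ of \eqref{xbeta}: the map $\omega\mapsto x(t,\omega)$ is monotone and $1$-Lipschitz, the characteristic becomes the solution of the scalar integral equation \eqref{ceq3} for $\omega(t)$, and uniqueness follows from a Gronwall estimate for the weighted distance $d^{(t)}(\omega_1,\omega_2)=\int_{\omega_1}^{\omega_2}e^{\kappa A^+(t,\omega)}\,d\omega$, where $A^+$ records the forward energy plus the residual interaction potential still to cross the backward characteristic. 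That weight is the essential device compensating for the failure of $\lambda_-\bigl(x(t,\omega),u(t,x(t,\omega))\bigr)$ to be Lipschitz in $\omega$ where forward energy concentrates; nothing in your scheme plays this role. You also omit the step (Step 4 of Lemma \ref{un_lem}) showing that the curve produced by the $\omega$-equation actually satisfies $\dot x^-=\lambda_-$ a.e., which requires a separate test-function argument against the measure balance laws and is not automatic from the construction.
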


We prove Theorem \ref{thm_un}, the uniqueness of conservative solutions $u=u(t,x)$, relying on the analysis of characteristics. This framework was first established in \cite{BCZ}.  Here the main difficulty in this uniqueness result arises from the low regularity of solution due to possible concentration of energy. In fact, when singularity forms as energy concentrates, solution might be only H\"older continuous. So the characteristic equations
\[
\frac{dx(t)}{dt}=\lambda_{\pm}\big(x(t),u(t,x(t))\big)
\]
whose right hand side are only  H\"older continuous on $t$,
might exist more than one solutions, after energy concentration. One needs to find a way to use the energy conservation law in its weak sense, to select a unique characteristic after energy concentration (step 1), then a unique solution of \eqref{vwl}--\eqref{ID} (step 2).

The underlying idea is to first introduce a new set of energy related independent variables $\omega$ and $\upsilon$, for forward and backward characteristics, respectively. These energy related independent variables allow us to apply the balance laws in \eqref{mbl}, to prove the uniqueness of characteristic (step 1).
In this step, one main idea is to use some weighted Riemannian distance to measure the distance between $\omega_1$ and $\omega_2$ (also for $\upsilon_1$ and $\upsilon_2$) corresponding to two different characteristics, and prove some Lipschitz property of this distance, which directs to the uniqueness of characteristic. After proving the uniqueness of characteristic, for any given solution $u(t,x)$, we show that it satisfies a semi-linear system on some dependent variables. Since this semi-linear system always has a unique solution, we prove our uniqueness result (step 2). This step is essentially a reverse process of the existence proof in \cite{H}. 

\section{The existence and uniqueness of characteristics \label{step1}}

In this section, we prove the existence and uniqueness of characteristics, which play a crucial role in our analysis. Motivated by a recent paper \cite{BCZ}, the key idea is
simply to write a pair of ODEs along forward and backward characteristic curves starting at a given point $\bar{y}$, respectively. If each of these two equations admits a unique solution for a.e. $\bar{y}$, then all characteristic curves can be uniquely determined. Note, as mentioned in Section \ref{sec_mainresult}, for any fixed $\bar{y}\in\mathbb{R}$, the Cauchy problems
\bel{dxpm}\dot x^-(t)~=~\lambda_-\big(x^-(t),u(t,x^-(t))\big),\qquad\qquad \dot x^+(t)~=~\lambda_+\big(x^+(t),u(t,x^+(t))\big),
\eeq
with initial data
\bel{icy}
x^-(0)~=~\bar y,\qquad\qquad x^+(0)~=~\bar y,\eeq
might have multiple solutions, since $u(t,x)$ is only H\"older continuous. Here the upper dot denote a derivative w.r.t. time. To overcome this difficulty, our analysis bases on two key points.

\begin{itemize}
  \item[$\blacklozenge.$]  We introduce a pair of variables corresponding to the forward and backward energies, related to the original Eulerian coordinates $(t,x)$ by the following transformation
\bel{xbeta} x(t,\omega) + \int_{-\infty}^{x(t,\omega)} \Tilde{R}^2(t,\xi)\, d\xi~=~\omega\,,\eeq
\bel{ybeta} y(t,\upsilon) + \int_{-\infty}^{y(t,\upsilon)} \Tilde{S}^2(t,\xi)\, d\xi~=~\upsilon\,.\eeq
Here $w$ and $\nu$ denote an energy related parameter of the backward characteristic and the forward characteristic, respectively.
Such energy variables help us select the ``correct" characteristic after the collapse
of characteristics at the time of energy concentration (or wave breaking).
  \item[$\blacklozenge.$] $u=u(t, x)$ is a conservative solution of the Cauchy problem \eqref{vwl}--\eqref{ID} with the balance laws \eqref{balance}. Thus, the characteristic curves $t\mapsto x^{\pm}(t)$ satisfy the additional equations
\bel{ceq1}
{d\over dt}  \int_{-\infty}^{x^-(t)}\Tilde{R}^2(t,x)\, dx~=~\int_{-\infty}^{x^-(t)} \big(\frac{2c_2a_1}{\alpha(c_2-c_1)}R^2 S-\frac{2c_1a_2}{\alpha(c_2-c_1)}RS^2-\frac{2c_1c_2b}{\alpha(c_2-c_1)}RS\big)
\, dx\,,\eeq
\bel{ceq2}{d\over dt}  \int_{-\infty}^{x^+(t)}\Tilde{S}^2(t,x)\, dx~=~-\int_{-\infty}^{x^+(t)} \big(\frac{2c_2a_1}{\alpha(c_2-c_1)}R^2 S-\frac{2c_1a_2}{\alpha(c_2-c_1)}RS^2-\frac{2c_1c_2b}{\alpha(c_2-c_1)}RS\big)
\, dx\,.\eeq
By these two equations together with all equations in \eqref{dxpm}--\eqref{icy}, we will eventually obtain that the characteristic curves can be uniquely determined.
\end{itemize}

Let $u=u(t,x)$ be a conservative solution of (\ref{vwl}), which satisfies all the properties listed in Theorems \ref{thm_ex} and \ref{thm_ec}.
As mentioned above, it is convenient to work with
an adapted set of variables $x(t, \omega)$, $y(t,\upsilon)$, instead of the variables $(t,x)$ by the integral relations \eqref{xbeta}--\eqref{ybeta}. At times $t$ where the measures $\mu_-^t,\mu_+^t$ are not absolutely continuous w.r.t. Lebesgue measure, we can define the points $x(t,\omega)$ and
$ y(t,\upsilon)$ by setting
\begin{equation*}\label{xadef}
x(t,\omega)~:=~\sup\Big\{ x\,;~~x+\mu^t_-\bigl(\,(-\infty,x]\,
\bigr) ~<~\omega\Big\}\,,\end{equation*}
\begin{equation*}\label{ybdef}y(t,\upsilon)~:=~\sup\Big\{ x\,;~~x+\mu^t_+\bigl(\,(-\infty,x]\,\bigr)
 ~<~\upsilon\Big\}\,,\end{equation*}
for $\omega,\upsilon\in \mathbb R$. Hence,
\bel{xa}
\omega~=~x(t,\omega)+\mu_-^t\Big(\bigl(-\infty\,, ~x(t,\omega)
\bigr)\Big)+\theta\cdot \mu^t_- \Big(\bigl\{x(t,\omega)
\bigr\}\Big)\,,\eeq
\bel{yb}
\upsilon~=~y(t,\upsilon)+\mu_+^t\Big(\bigl(-\infty\,,~ y(t,\upsilon)
\bigr)\Big) +\theta'\cdot \mu^t_- \Big(\bigl\{y(t,\upsilon)
\bigr\}\Big)\,,\eeq
for some $\theta,\theta'\in [0,1]$. Since the measures $\mu^t_-$, $\mu^t_+$ are both positive and bounded, it is clear that these points are well defined. Notice that the above definitions coincides with \eqref{xbeta}--\eqref{ybeta} at any time $t$ where the measures $\mu_-^t,\mu_+^t$ are absolutely continuous w.r.t. the Lebesgue measure.


We give the first lemma of this
section which is helpful to establish the property of $x,y$ and $u$ as functions
of the variables $t, \omega,\upsilon$.

\begin{Lemma} \label{xy_lem}
For every fixed $t$,  the maps $\omega\mapsto x(t,\omega)$  and
 $\upsilon\mapsto y(t,\upsilon)$  are both Lipschitz continuous with constant 1.
 Moreover, for fixed $\omega,\upsilon$, the maps $t\mapsto  x(t,\omega)$ and
 $t\mapsto  y(t,\upsilon)$
are absolutely continuous, locally H\"older continuous with exponent $1/2$, and have locally bounded variation.
\end{Lemma}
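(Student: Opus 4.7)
The Lipschitz bounds in the $\omega$ and $\upsilon$ variables fall out directly from the defining relations. Fix $t$ and introduce the nondecreasing map $F_t(x):=x+\mu_-^t\bigl((-\infty,x]\bigr)$; for $x_1<x_2$ one computes $F_t(x_2)-F_t(x_1)=(x_2-x_1)+\mu_-^t\bigl((x_1,x_2]\bigr)\geq x_2-x_1$, so $F_t$ is at least as steep as the identity. By \eqref{xa} the function $\omega\mapsto x(t,\omega)$ is the generalized inverse of $F_t$, and the displayed inequality transfers immediately to $|x(t,\omega_1)-x(t,\omega_2)|\leq|\omega_1-\omega_2|$. The analogous argument with $G_t(x):=x+\mu_+^t\bigl((-\infty,x]\bigr)$ produces the Lipschitz bound for $y(t,\upsilon)$.

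For the time regularity at fixed $\omega$, I would take $t_1<t_2$ and set $x_i:=x(t_i,\omega)$; subtracting the two instances of \eqref{xa} yields
\begin{equation*}
x_2-x_1 \;=\; \mu_-^{t_1}\bigl((-\infty,x_1)\bigr)-\mu_-^{t_2}\bigl((-\infty,x_2)\bigr)+\theta_1\mu_-^{t_1}(\{x_1\})-\theta_2\mu_-^{t_2}(\{x_2\}).
\end{equation*}
To control the right-hand side I would test the balance law (iv) of Definition~\ref{def_ec} against a mollified indicator of the space-time region $\{(t,x):t_1<t<t_2,\ x<X(t)\}$, where $X(\cdot)$ is a Lipschitz curve with $X(t_i)=x_i$, and pass to the limit. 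After using the $\theta$-convention to account for atoms, this produces
\begin{equation*}
\mu_-^{t_2}\bigl((-\infty,X(t_2))\bigr)-\mu_-^{t_1}\bigl((-\infty,X(t_1))\bigr) \;=\; \int_{t_1}^{t_2}\!\!\int_{-\infty}^{X(t)} G\,dx\,dt \;-\; \int_{t_1}^{t_2}\Bigl[\bigl(\tfrac{c_1}{\alpha}-\dot X\bigr)\tilde R^2\Bigr]_{x=X(t)}\!dt.
\end{equation*}
Choosing $X$ as a smoothed backward characteristic (which annihilates the boundary term), or as a linear interpolant, and combining with the preceding identity, I would estimate $|x_2-x_1|$ by a quantity of the order of $|t_2-t_1|$, using (i) the uniform bound $|c_1/\alpha|\leq\|\lambda_-\|_\infty$ from \eqref{con}--\eqref{lambda}, (ii) the conserved total-energy identity $\mu_-^t(\mathbb R)+\mu_+^t(\mathbb R)\equiv E_0$, and (iii) an $L^1_{\mathrm{loc}}(dt)$ control on $\|G(t,\cdot)\|_{L^1_x}$. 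This bound is enough to deduce absolute continuity, local bounded variation, and local H\"older $1/2$ continuity of $t\mapsto x(t,\omega)$ simultaneously, and the argument for $y(t,\upsilon)$ is fully symmetric.

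The principal obstacle is the integrability of the source $G$. Since it contains the cubic products $R^2S$ and $RS^2$ while $R,S$ are only controlled in $L^2_x$, a naive application of Cauchy--Schwarz does not place them in $L^1_x$ pointwise in $t$. To obtain the required $L^1_{\mathrm{loc}}(dt)$-bound one needs to exploit the structural estimates developed in \cite{H} in the adapted coordinates $(\omega,\upsilon)$, where the Jacobian factors absorb an extra factor of $R$ or $S$ and the source becomes uniformly integrable. A secondary technical issue is the rigorous passage to the limit through concentrations of $\mu^t_\pm$ (atoms); this is handled by the $\theta\in[0,1]$ convention appearing in \eqref{xa}--\eqref{yb} together with the weak-topology continuity of $t\mapsto\mu^t_\pm$ embedded in Definition~\ref{def_ec}.
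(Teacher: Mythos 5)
Your treatment of the spatial Lipschitz bound is exactly the paper's argument (the map $x\mapsto x+\mu_-^t((-\infty,x])$ dominates the identity, so its generalized inverse is $1$-Lipschitz), and your overall plan for the time regularity — test the balance law \eqref{mbl} on a space-time region bounded by a curve and combine with \eqref{xa} — is also the route the paper takes. But there is a genuine gap at the point you yourself flag and then defer. The entire difficulty of this lemma is to show that the source $G$, which contains the cubic terms $R^2S$ and $RS^2$, satisfies a quantitative space-time bound; the paper does \emph{not} borrow this from \cite{H} but proves it on the spot with a Glimm-type wave interaction potential $Q(t)=(\mu_-^t\otimes\mu_+^t)(\{x>y\})$. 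Differentiating $Q$ along the flow and using the transversality bound $\frac{c_2-c_1}{\alpha}\ge \frac{2\gamma_1}{\alpha_2}$ yields $\int_0^T\!\!\int \Tilde{R}^2\Tilde{S}^2\,dx\,dt\le C(E_0,T)$, and only this allows one to absorb the cubic part of $G$ (via $|S|\le \frac{1}{2\sqrt\epsilon}+\frac{\sqrt\epsilon}{2}\underline{M}\Tilde S^2$) and conclude that $\varsigma(\tau)=\int_0^\tau\!\!\int|G|\,dx\,dt$ satisfies $\varsigma(t_2)-\varsigma(t_1)\le C_1|t_2-t_1|^{1/2}$. Without supplying this estimate your proof is incomplete: an $L^1_{\mathrm{loc}}(dt)$ control on $\|G(t,\cdot)\|_{L^1_x}$, which is all you list in item (iii), would give absolute continuity but not the H\"older-$1/2$ modulus claimed in the lemma.

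Two further points. First, your assertion that the final estimate on $|x_2-x_1|$ is ``of the order of $|t_2-t_1|$'' is an overstatement and cannot be right: the correct bound is $x(t_1,\omega)-\overline{N}(t_2-t_1)-[\varsigma(t_2)-\varsigma(t_1)]\le x(t_2,\omega)\le x(t_1,\omega)+[\varsigma(t_2)-\varsigma(t_1)]$, and the $\varsigma$ increment is only $O(|t_2-t_1|^{1/2})$ — which is precisely why the lemma claims H\"older $1/2$ rather than Lipschitz continuity in $t$. Second, your exact identity with the boundary flux $\bigl[(\tfrac{c_1}{\alpha}-\dot X)\Tilde R^2\bigr]_{x=X(t)}$ presupposes a pointwise trace of the $L^1_x$ density $\Tilde R^2$ on a curve, which is not available; the paper instead uses two one-sided inequalities obtained by choosing the test region so that the flux term has a definite sign (the measure cannot move faster than $\overline{N}$ up to the source), which sidesteps the trace issue entirely.
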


\begin{proof}[\bf Proof] Let's first make some calculations which will be used throughout this paper. Applying \eqref{con} and \eqref{lambda}, one has
\begin{equation}\label{not1}
\frac{c_2-c_1}{\alpha}=\lambda_+-\lambda_-=\frac{2\sqrt{\beta^2+\alpha^2\gamma^2}}{\alpha^2}\geq \frac{2\gamma_1}{\alpha_2},
\end{equation}
and
\begin{equation}\label{not2}
\frac{1}{\underline{M}}:=\frac{\alpha_1^2\gamma_1^2}{2(\beta_2^2+\alpha^2_2\gamma_2^2+\beta_2\sqrt{\beta^2_2+\alpha^2_2\gamma_2^2})}
\leq\big|\frac{c_i}{c_2-c_1}\big|\leq \frac{\beta_2+\sqrt{\beta^2_2+\alpha^2_2\gamma_2^2}}{2\alpha_1\gamma_1}=:\overline{M}
\end{equation}
for $i=1,2$. Now we prove this lemma by four steps.

{\bf Step 1.} For the first statement, observe that if
$$x_1~:=~ x(t,\omega_1)~<~x(t,\omega_2)~=:~x_2\,,$$
then a direct computation implies that
$$x_2-x_1 +\mu_-^t\bigl(\,(x_1, x_2)\,\bigr)~\leq~\omega_2-\omega_1\,.$$
This gives that
$$x_2-x_1 ~\leq~\omega_2-\omega_1\,,$$
proving the Lipschitz continuity of the map $\omega\mapsto x(t,\omega)$.
Of course, the same argument is valid for the map $\upsilon\mapsto y(t,\upsilon)$.

{\bf Step 2.} We continue with the second statement.
Denote $\mu_-^t\otimes\mu_+^t$ be
 the product measure on $\mathbb{R}^2$ and
define the wave interaction potential as
\begin{equation*}
Q(t)~:=~\Big(\mu_-^t\otimes\mu_+^t\Big)\Big( \{(x,y)\,;~~x>y\}\Big).\end{equation*}
Using
the balance laws  (\ref{mbl}) and \eqref{not1}, then recalling that $\Tilde{R}^2(t,\cdot)$ and $\Tilde{S}^2(t,\cdot)$ provide the
absolutely continuous parts of $\mu_-^t$ and $\mu_+^t$, respectively. It holds that
\begin{equation*} \begin{split}
{d\over dt}Q(t)&\leq~-\int_{-\infty}^{+\infty}\frac{c_2-c_1}{\alpha}
 \Tilde{R}^2(t,x)\Tilde{S}^2(t,x)\, dx +
\int\int^{+\infty}_y|G(t,x)|\, dx\,d\mu_+^t(y)\\
&\qquad+
\int \int^x_{-\infty}|G(t,y)|\, dy\,d\mu_-^t(x) \\
&\leq~-\frac{2\gamma_1}{\alpha_2}\int_{-\infty}^{+\infty} \Tilde{R}^2(t,x)\Tilde{S}^2(t,x)\, dx +
 \bigr(\mu_-^t(\mathbb R)+\mu_+^t(\mathbb R)\bigl) \int_{-\infty}^{+\infty}|G(t,x)|\, dx\\
&\leq~-\frac{2\gamma_1}{\alpha_2}\int_{-\infty}^{+\infty} \Tilde{R}^2\Tilde{S}^2\, dx  +
\widehat{C} E_0 \int_{-\infty}^{+\infty}\Big(|\Tilde{R}^2S|+|R\Tilde{S}^2|+\Tilde{R}^2+\Tilde{S}^2\Big)\, dx,
\end{split}
\end{equation*}
here we have use the fact that \begin{equation}\label{Gest}
|G|\leq \widehat{C}\big(|\Tilde{R}^2S|+|R\Tilde{S}^2|+\Tilde{R}^2+\Tilde{S}^2\big),
\end{equation}
for some positive constant $\widehat{C}$. Moreover, for each $\epsilon>0$, we notice by \eqref{not2} that
\begin{equation}\label{rsest}
|S|\leq \frac{1}{2\sqrt{\epsilon}}+\frac{\sqrt{\epsilon}}{2}\underline{M}\Tilde{S}^2\quad {\rm and}\quad |R|\leq \frac{1}{2\sqrt{\epsilon}}+\frac{\sqrt{\epsilon}}{2}\underline{M}\Tilde{R}^2.\end{equation}
We then choose $\epsilon>0$ such that $$\sqrt{\epsilon}\leq \frac{\gamma_1}{\alpha_2\underline{M}\widehat{C}E_0}$$ to get
\bel{DQ} \begin{split}
{d\over dt}Q(t)&\leq~-\frac{\gamma_1}{\alpha_2}\int_{-\infty}^{+\infty} \Tilde{R}^2\Tilde{S}^2\, dx +
\widehat{C} E_0(1+\frac{\alpha_2\underline{M}\widehat{C} E_0}{2\gamma_1}) \int_{-\infty}^{+\infty}\big(\Tilde{R}^2+\Tilde{S}^2\big)\, dx\\
&\leq~-\frac{\gamma_1}{\alpha_2}\int_{-\infty}^{+\infty} \Tilde{R}^2\Tilde{S}^2\, dx +
\widehat{C} E_0^2(1+\frac{\alpha_2\underline{M}\widehat{C} E_0}{2\gamma_1}).
\end{split}
\eeq

Since $Q(t)\leq E_0^2$ for every time $t$, from (\ref{DQ}) one has
\bel{SRb}\begin{split}
\int_0^T \int_{-\infty}^{+\infty} \Tilde{R}^2(t,x)\Tilde{S}^2(t,x)\, dx\, dt&\leq~
{\alpha_2\over \gamma_1}\left[ Q(0)-Q(T) + \widehat{C} E_0^2T(1+\frac{\alpha_2\underline{M}\widehat{C} E_0}{2\gamma_1})\right]\\
&\leq~
\frac{2\alpha_2E_0^2}{\gamma_1}+\frac{\alpha_2}{\gamma_1}\widehat{C} E_0^2T(1+\frac{\alpha_2\underline{M}\widehat{C} E_0}{2\gamma_1})\,.\end{split}\eeq

{\bf Step 3.} For a given $\tau$ and any $\ve\in\,(0,1]$, by using \eqref{rsest} and (\ref{SRb}), we have
\begin{equation*}
\begin{split}
&\quad\int_\tau^{\tau+\ve} \int_{-\infty}^{+\infty} |G(t,x)|
\,dx\, dt\leq ~\widehat{C}\int_\tau^{\tau+\ve} \int_{-\infty}^{+\infty} \Big(|\Tilde{R}^2S|+|R\Tilde{S}^2|+\Tilde{R}^2+\Tilde{S}^2\Big)\,dx\, dt\\
&\ds\leq~ \widehat{C}\int_\tau^{\tau+\ve} \int_{-\infty}^{+\infty} \Big(\frac{1}{2\sqrt{\varepsilon}}(\Tilde{R}^2+\Tilde{S}^2)+\varepsilon^{\frac{1}{2}}\Tilde{R}^2\Tilde{S}^2\underline{M}
+\Tilde{R}^2+\Tilde{S}^2\Big)\,dx\, dt\\
&\leq~ \frac{\widehat{C}}{2}\varepsilon^{\frac{1}{2}}E_0+\widehat{C}\varepsilon^{\frac{1}{2}}\underline{M}\Big[\frac{2\alpha_2E_0^2}{\gamma_1}+\frac{\alpha_2}{\gamma_1}\widehat{C} E_0^2\varepsilon(1+\frac{\alpha_2\underline{M}\widehat{C} E_0}{2\gamma_1})\Big]+\widehat{C}\varepsilon E_0\\
& \leq~\widehat{C} E_0\left[\frac{3}{2}+\frac{2\alpha_2E_0\underline{M}}{\gamma_1}+\frac{\alpha_2\widehat{C}E_0\underline{M}}{\gamma_1} +\frac{\alpha_2^2\underline{M}^2\widehat{C}^2 E_0^2}{2\gamma_1^2}
\right]\ve^{1/2} .
 \end{split}
 \end{equation*}
 Thus, the function $\varsigma$ defined by
 \bel{zdef}
 \begin{split}
 \varsigma(\tau)~&:=~\int_0^\tau \int_{-\infty}^{+\infty}
|G(t,x)|\,dx\, dt\\
&=~\int_0^\tau \int_{-\infty}^{+\infty}
 \left|\frac{2c_2a_1}{\alpha(c_2-c_1)}R^2 S-\frac{2c_1a_2}{\alpha(c_2-c_1)}RS^2-\frac{2c_1c_2b}{\alpha(c_2-c_1)}RS\right|
\,dx\, dt\end{split}\eeq
is locally H\"older continuous, nondecreasing, with sub-linear growth.
Since $G(t,x)\in \L^1([0,T]\times\mathbb{R})$, by Fubini's theorem the map
$t\mapsto \int
 |G(t,x)|\,dx$ is  in $\L^1(\mathbb{R})$.   By its definition (\ref{zdef}),
the function $\varsigma$ is absolutely continuous.
Moreover, for $0<t_2-t_1\leq 1$ we have
$$
\varsigma(t_2)-\varsigma(t_1)~\leq ~C_1(t_1-t_2)^{1/2},
$$
where the constant $C_1$ is defined as
\begin{equation*}\label{const1}
C_1~:=~ \widehat{C} E_0\left[\frac{3}{2}+\frac{2\alpha_2E_0\underline{M}}{\gamma_1}+\frac{\alpha_2\widehat{C}E_0\underline{M}}{\gamma_1} +\frac{\alpha_2^2\underline{M}^2\widehat{C}^2 E_0^2}{2\gamma_1^2}
\right].
\end{equation*}

{\bf Step 4.}
In view of \eqref{con} and \eqref{lambda}, there exist constants $\underline{N}$ and $\overline{N}$ such that
\begin{equation*}\label{lambdaest}
\underline{N}:=\frac{\gamma_1^2 }{\beta_2+\sqrt{\beta^2_2+\alpha^2_2\gamma_2^2}}\leq
|\lambda_{\pm}|\leq\frac{\beta_2+\sqrt{\beta^2_2+\alpha^2_2\gamma_2^2}}{\alpha_1^2}=:\overline{N}.
\end{equation*}
For any $t_1<t_2$ and any $\omega$, since the family of measures
$\mu_-^t$ satisfies the balance law (\ref{mbl}) with velocity $\lambda_-$, we obtain
\begin{equation*}
\mu_-^{t_2}\Big( \big(-\infty\,,~ x(t_1,\omega)\big)\Big)
~\geq~\mu_-^{t_1}\Big( \bigl(-\infty\,,~ x(t_1,\omega)\big)\Big) -
\bigl[\varsigma(t_2)
-\varsigma(t_1)\bigr]\,,\end{equation*}
\begin{equation*}
\mu_-^{t_2}\Big( \big(-\infty\,,~ x(t_1,\omega)-\overline{N}(t_2-t_1)\big)\Big)
~\leq~\mu_-^{t_1}\Big( \big(-\infty\,,~ x(t_1,\omega)\big)\Big) +\big[\varsigma(t_2)
-\varsigma(t_1)\big]\,.\end{equation*}
This follows from the definition (\ref{xa}) that
\begin{equation*}
x(t_1,\omega)-\overline{N}(t_2-t_1) -  \bigl[\varsigma(t_2)
-\varsigma(t_1)\bigr]  ~\leq~ x(t_2,\omega) ~\leq ~x(t_1,\omega)+\bigl[\varsigma(t_2)
-\varsigma(t_1)\bigr].\end{equation*}
By the properties of the function $\varsigma$, proved in step {\bf 3}, we complete the proof for the map $t\mapsto x(t,\omega)$.
Of course, the same argument is valid for the map $t\mapsto y(t,\upsilon)$.
\end{proof}
\begin{Remark}
The estimates in the proof of this Lemma, particularly \eqref{SRb}, are crucial for the Lipschitz metric result in \cite{CCS}.
\end{Remark}

\vskip 0.3cm

Now, we show in the next lemma that for a conservative solution the characteristics can be uniquely determined by combining the characteristic equations (\ref{dxpm})--(\ref{icy}) and the  balance laws (\ref{ceq1})--(\ref{ceq2}). One main spirit in the proof is to introduce a weighted distance, including some wave interaction potentials, in order to control the possible increase of forward or backward energy during wave interactions.

More precisely, we will show that there exist a unique pair of  backward and forward characteristics $t\mapsto x^-(t)=x(t,\omega(t))$ and $t\mapsto x^+(t)=y(t,\upsilon(t))$, starting from the point $(0,\bar y)$,  satisfying equations (\ref{dxpm})--(\ref{icy}), as well as (\ref{ceq1})--(\ref{ceq2}), with $t\mapsto \omega(t)$ and $t\mapsto \upsilon(t)$ be the unique solution to the following integral equations
\begin{equation}\label{uweqn}
\begin{split}
\omega(t)=&\bar{\omega}+\int_0^t \Big\{\frac{c_1}{\alpha}\big(x(s, \omega(s)),u(s,x(s, \omega(s))\big)\\
&+\int_{-\infty}^{x(s, \omega(s))}
\big[\frac{2c_2a_1}{\alpha(c_2-c_1)}R^2 S-\frac{2c_1a_2}{\alpha(c_2-c_1)}RS^2-\frac{2c_1c_2b}{\alpha(c_2-c_1)}RS
\, dx\big]\Big\}\,ds,\\
 {\rm and}~\upsilon(t)=&\bar\upsilon +\int_0^t \Big\{\frac{c_2}{\alpha}\big(y(s, \upsilon(s)),u(s,y(s, \upsilon(s))\big)\\
&-\int_{-\infty}^{y(s, \upsilon(s))}
\big[\frac{2c_2a_1}{\alpha(c_2-c_1)}R^2 S-\frac{2c_1a_2}{\alpha(c_2-c_1)}RS^2-\frac{2c_1c_2b}{\alpha(c_2-c_1)}RS
\, dx\big]\Big\}\,ds,
\end{split}\end{equation}
respectively. Here \[\bar{\omega}=\omega(0)=\bar{y}+\int^{\bar{y}}_{-\infty}\Tilde{R}^2(0,x)\,dx,\]
 and \[\bar{\upsilon}=\upsilon(0)=\bar{y}+\int^{\bar{y}}_{-\infty}\Tilde{S}^2(0,x)\,dx.\] Now, we state this fundamental result.

\begin{Lemma} \label{un_lem}
Let $u=u(t,x)$ be a conservative solution of (\ref{vwl})--\eqref{ID}.
Then, for any $\bar y\in \mathbb{R}$, there exist unique Lipschitz continuous maps
$t\mapsto x^\pm(t)$ which satisfy (\ref{dxpm})--(\ref{icy})
together with (\ref{ceq1})--(\ref{ceq2}).
\end{Lemma}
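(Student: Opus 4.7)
The plan is to convert the coupled system (characteristic ODE + energy balance law) into the single fixed-point integral system \eqref{uweqn} for the adapted coordinates $\omega(t),\upsilon(t)$, and then prove existence by a compactness / fixed-point argument and uniqueness by a weighted-distance argument. First I would verify the equivalence between \eqref{dxpm}--\eqref{icy} together with \eqref{ceq1}--\eqref{ceq2} on the one hand, and \eqref{uweqn} on the other: if $t\mapsto x^-(t)$ is Lipschitz and satisfies both the ODE and the balance law, then
\[
\omega(t)\;:=\;x^-(t)+\mu_-^t\bigl((-\infty,x^-(t))\bigr)
\]
(interpreted via \eqref{xa} at jumps of $\mu_-^t$) satisfies the first line of \eqref{uweqn}; conversely, any solution $\omega(\cdot)$ of \eqref{uweqn} yields a characteristic $x^-(t):=x(t,\omega(t))$ solving \eqref{dxpm}--\eqref{icy} and \eqref{ceq1}. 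The analogous correspondence holds for $y,\upsilon$. This reduces the lemma to existence and uniqueness of a Lipschitz solution to \eqref{uweqn}.

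For existence, the right-hand side of \eqref{uweqn} has bounded Lipschitz norm: $|c_1/\alpha|\leq\overline{N}$ is bounded, while the wave-interaction integral is dominated by the increment $\varsigma(t)-\varsigma(s)$ of the absolutely continuous, H\"older-$1/2$ function from \eqref{zdef}. Combined with the Lipschitz-$1$ continuity of $\omega\mapsto x(t,\omega)$ from Lemma \ref{xy_lem} and the weak continuity of $t\mapsto\mu_\pm^t$, the map $\omega(\cdot)\mapsto\text{RHS of }\eqref{uweqn}$ sends a suitable closed, convex, equi-Lipschitz family of continuous functions on $[0,T]$ into itself and is continuous in the uniform topology. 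Schauder's fixed-point theorem then produces a Lipschitz solution $\omega(\cdot)$, and the same argument gives $\upsilon(\cdot)$.

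The main obstacle is uniqueness, because the right-hand side of \eqref{uweqn} is not Lipschitz in $\omega$: only the weak balance laws \eqref{mbl} are available and $u$ is merely H\"older continuous, so two candidates can a priori separate after wave breaking. Following the strategy of \cite{BCZ}, I would compare two candidate solutions $\omega_1(t),\omega_2(t)$ through a weighted distance of the form
\[
\Phi(t)\;:=\;|\omega_1(t)-\omega_2(t)|\;+\;\kappa\!\int_{x(t,\omega_1(t))\wedge x(t,\omega_2(t))}^{x(t,\omega_1(t))\vee x(t,\omega_2(t))}\!\!\bigl(1+\Tilde R^2+\Tilde S^2\bigr)(t,\xi)\,d\xi,
\]
with $\kappa>0$ chosen large. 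The interaction integral is the ``weight'' that absorbs the non-Lipschitz jump of the kernel when the two characteristics cross a concentration of $\mu_-^t$. Using the balance laws \eqref{mbl}, the pointwise bound \eqref{Gest} on $G$, the estimates \eqref{not1}--\eqref{not2}, and the global control \eqref{SRb}, I would differentiate $\Phi$ along the two candidate characteristics and derive a Gr\"onwall-type inequality $\dot\Phi(t)\leq \eta(t)\,\Phi(t)$ with $\eta\in L^1_{\rm loc}$. Since $\Phi(0)=0$, this forces $\Phi\equiv 0$ and hence $\omega_1\equiv\omega_2$. The analogous choice of weight (with the roles of $\Tilde R^2,\Tilde S^2$ interchanged) handles the forward family and yields $\upsilon_1\equiv\upsilon_2$. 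Finally, Lipschitz continuity of $x^{\pm}(t)=x(t,\omega(t))$ and $y(t,\upsilon(t))$ is immediate from \eqref{dxpm} and the uniform bounds on $\lambda_\pm$.
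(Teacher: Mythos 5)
Your overall architecture (reduce to the integral equation \eqref{uweqn} for $\omega$, existence by Schauder, uniqueness by a weighted Gronwall estimate) is the same as the paper's, but two of your steps have genuine gaps.

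The first gap is the converse half of the ``equivalence'' you assert at the outset. Knowing that $\omega(\cdot)$ solves \eqref{uweqn} gives $\dot\omega(t)=\lambda_-+\int_{-\infty}^{x(t,\omega(t))}G\,dx$, but it does \emph{not} follow formally that $x^-(t):=x(t,\omega(t))$ satisfies $\dot x^-=\lambda_-(x^-,u(t,x^-))$: the map $\omega\mapsto x(t,\omega)$ is only $1$-Lipschitz and is constant on the intervals corresponding to atoms of $\mu_-^t$, so the composition cannot simply be differentiated. This is exactly the content of Step 4 of the paper's proof of Lemma \ref{un_lem}: one assumes by contradiction that $\dot x^-(\tau)=\lambda_-+2\ve_0$, tests the measure balance law \eqref{mbl} with a Lipschitz test function adapted to the wedge between the straight line $X(t)$ in \eqref{xpmt} and $x^-(t)$, and uses Lebesgue points of $G$ and of $\dot\omega$ to contradict \eqref{ceq5}. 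Without this argument your reduction proves uniqueness of $\omega$ but not that the resulting curve is a characteristic, so the lemma is not established.

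The second gap is the uniqueness weight. Your functional $\Phi=|\omega_1-\omega_2|+\kappa\int_{x_1\wedge x_2}^{x_1\vee x_2}(1+\Tilde R^2+\Tilde S^2)\,d\xi$ (with $x_i=x(t,\omega_i(t))$) is \emph{additive}, and it does not close the Gronwall inequality. The dangerous contributions to $\dot\omega_1-\dot\omega_2$ are the cubic terms $\int_{x_1}^{x_2}(|\Tilde R^2S|+|R\Tilde S^2|)\,dx$, which via \eqref{rsest} reduce to $\int_{x_1}^{x_2}\Tilde R^2\Tilde S^2\,dx$; this quantity is not bounded by $\eta(t)\Phi(t)$ with $\eta\in L^1_{\rm loc}$ (Cauchy--Schwarz plus \eqref{SRb} only yields a bound $g(t)\Phi^{1/2}$ with $g\in L^2$, which is compatible with $\Phi(0)=0$ and $\Phi\not\equiv0$). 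Differentiating your term $\int_{x_1(t)}^{x_2(t)}\Tilde S^2\,d\xi$ along the two backward characteristics produces only the boundary fluxes $(\lambda_+-\lambda_-)\bigl[\Tilde S^2(x_1)-\Tilde S^2(x_2)\bigr]$, which have no sign and hence no absorption effect. The paper instead uses the multiplicative weight $W=e^{\kappa A^+}$ with $A^+(t,\omega)=\mu_+^t\bigl((-\infty,x(t,\omega)]\bigr)+[\varsigma(T)-\varsigma(t)]$ and $d^{(t)}(\omega_1,\omega_2)=\int_{\omega_1}^{\omega_2}W\,d\omega$: since forward energy crosses a backward characteristic at rate at least $\frac{2\gamma_1}{\alpha_2}\Tilde S^2$, the weight decays in time and $\frac{d}{dt}d^{(t)}$ acquires a negative term of size $\kappa\int_{x_1}^{x_2}\Tilde S^2(1+\Tilde R^2)W\,dx$ that absorbs precisely the cubic interaction terms. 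A smaller point: your Schauder set should be equi-H\"older-$1/2$ (with modulus controlled by $\varsigma$ from \eqref{zdef}), not equi-Lipschitz, since $\int_{-\infty}^{x}|G(t,\cdot)|\,dx$ is only integrable in time; it is $x^\pm(t)$, not $\omega(t)$, that are Lipschitz.
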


\begin{proof}[\bf Proof]  We claim  that there exists a unique function
$t\mapsto \omega(t)$ such that
\begin{equation*}\label{xal}x^-(t)~=~x(t,\omega(t))\end{equation*}
satisfies the first equation in (\ref{dxpm})--(\ref{icy}) and (\ref{ceq1}).
Of course, the same argument can be applied to the map $t\mapsto x^+(t)$ which satisfy the second equation in \eqref{dxpm}--\eqref{icy} and \eqref{ceq2}. Now we prove this claim by five steps.

{\bf Step 1.} We first construct the equation for $\omega(t)$ in the following way.
Summing the first equation in (\ref{dxpm}) with
(\ref{ceq1}) and integrating the resultant equation w.r.t.~time, one has an integral equation for $\omega$,
\bel{ceq3}\bega{l}\ds
\omega(t)=~x^-(t)+\mu_-^{t}\Big( \big(-\infty\,,~ x^-(t)\big)\Big)+
\theta \cdot\mu_-^{t}\Big( \{x^-(t)\}\Big)
\cr\cr\ds\qquad=~\bar\omega + \int_0^t \left( \lambda_-(x^-(s),u(s, x^-(s)))+\int_{-\infty}^{x^-(s)} G(s,x)
\, dx\right) ds\,,\enda\eeq
for some $\theta\in [0,1]$ and $G$ defined in \eqref{balance}.
Here
\bel{ica}\bar \omega~=~\omega(0)~=~\bar y +\int_{-\infty}^{\bar y} \Tilde {R}(0,x)\, dx\,.\eeq
We further observe that the equation (\ref{ceq3}) is equivalent to
\bel{ceq5}
\dot\omega(t)~=~ \lambda_-\big(x(t, \omega(t)),u(t,x(t, \omega(t)))\big)
+\int_{-\infty}^{x(t, \omega(t))}
G(t,x)
\, dx\,,\eeq
with initial data (\ref{ica}). To complete the proof we need to prove
that the integral equation  (\ref{ceq3}) has a unique solution
$\omega(t)$.
Moreover, the function $t\mapsto x^-(t) = x(t,\omega(t))$  satisfies the first equation in (\ref{dxpm}), as well as
(\ref{ceq1}).
\vskip 0.2cm
{\bf Step 2.} In this step, we show the existence of a solution to the integral equation
(\ref{ceq3}). Let's begin on the interval $t\in [0,1]$, then iterate the argument by induction.
More precisely, consider a set $\K$ of H\"older continuous functions by
\begin{equation*}
\ds \K~:=~\{f\in \C^{1/2}([0,1])\,;~~~\|f\|_{\C^{1/2}}~\leq~ C_K,
~~~ f(0)=\bar\omega\},
\end{equation*}
for a suitable constant $C_K$. On this set we claim that the Picard map $\P: \C^0([0,1])\mapsto \C^0([0,1])$, defined as
\begin{equation*}
\P\omega(t)~:=~\bar\omega + \int_0^t \left( \lambda_-\big(x(s, \omega(s)),u(s, x(s, \omega(s)))\big)+\int_{-\infty}^{x(s, \omega(s))} G(s,x)
\, dx\right) ds\,.
\end{equation*}
 a continuous transformation of
the compact convex  set $\K\subset \C^0([0,1])$ into itself.
 We omit the detailed proof here for brevity,
since a similar argument can be found in \cite{BCZ}.
By Schauder's fixed point theorem, we derive that the integral equation  (\ref{ceq3})
has at least one solution.  Iterating the argument, this solution can be
extended to any time interval $t\in [0,T]$.
\vskip 0.2cm
{\bf Step 3.} Now we are in a position to prove the uniqueness of the solution to
(\ref{ica})--(\ref{ceq5}) by controlling the highest order terms in \eqref{ceq5}. We consider the weight
\begin{equation*}\label{weight} W(t,\omega)~:=e^{\kappa A^+(t,\omega)}
\end{equation*}
with
\begin{equation*}\label{A+}\qquad\qquad A^+(t,\omega)~:=~\mu_+^t\Big((-\infty,~x(t,\omega)]\Big)+
[\varsigma(T)-\varsigma(t)].
\end{equation*}
Here $\varsigma$ is the function defined at (\ref{zdef}), while
\begin{equation*}\label{mudef}\kappa~:=~\frac{\alpha_2}{2\gamma_1}(\widehat{C}\sqrt{\underline{M}}
+\widehat{C}+\frac{\overline{C}\sqrt{\underline{M}}}{4\gamma_1})
\,,\end{equation*}
 where $\underline{M}$ and $\widehat{C}$ are defined in \eqref{not2} and \eqref{Gest}, respectively. and we assume $|\frac{\partial\lambda_-}{\partial x}|,|\frac{\partial\lambda_-}{\partial u}|\leq \overline{C}$ for some constant $\overline{C}$.

  We recall that $\varsigma(T)-\varsigma(t)$
 provides an upper bound on the energy transferred from backward to forward
 moving waves and conversely, during the time interval $[t,T]$.
In turn, $A^+(t,\omega)$ yields an upper bound on the total
energy of forward moving waves that can cross the backward characteristic
$x(\cdot,\omega)$
during the time interval  $[t,T]$.
For any $\omega_1<\omega_2$ and $t\geq 0$, we further define a weighted distance as
\begin{equation*}\label{RD}
d^{(t)}(\omega_1,\omega_2)~:=~\int_{\omega_1}^{\omega_2}
W(t,\omega)\, d\omega.
\end{equation*}
By the Gronwall's lemma, we can prove that
\begin{equation*}\label{Gro}
d^{(t)}\bigl(\omega_1(t),\omega_2(t)\bigr)~
\leq~ e^{C_0t}\, d^{(0)}\bigl(\omega_1(0),\omega_2(0)
\bigr),
\end{equation*}
with $C_0:=(\frac{\widehat{C}}{2}
+\frac{\overline{C}}{2\gamma_1})\sqrt{\underline{M}}+\widehat{C}+\overline{C}$. We omit the proof here for brevity, since an entirely similar approach of this result can be found in \cite{BCZ}. Thus,
for every initial value $\bar\omega$, the solution
of (\ref{ica})--(\ref{ceq5}) is unique.
\vskip 0.2cm
{\bf Step 4.}  Now we come to show that  $x^-(t)=x(t,\omega(t))$
satisfies the first equation in (\ref{dxpm}) at almost every time.
Applying the classical theorem of Lebesgue, we derive that
$$
\lim_{r\to 0^+}~{1\over \pi r^2}\,\dint_{(\tau-t)^2+(y-x)^2\leq r^2}
G(\tau,y)dyd\tau~=~G(t,x),
$$
for all $(t,x)\in ~(0,T)\,\times \mathbb{R}$ outside a null
set $\N_2$ whose 2-dimensional measure is zero, since $G(t,x)\in \L^1([0,T]\times\mathbb{R})$. In particular, if one divides by $r$ instead of $r^2$, by Corollary 3.2.3 in  \cite{Zi}
there is a set $\N_1\subset \N_2$
whose 1-dimensional Hausdorff measure is zero and  for every $(t,x)\not\in \N_1$
 $$
\limsup_{r\to 0^+}~{1\over r}\,\dint_{(\tau-t)^2+(y-x)^2\leq r^2}
G(\tau,y)dyd\tau~=~0.
$$

Moreover, by the definition of absolutely continuous and the fact that the map $\omega\mapsto x(t,\omega)$ is contractive, we can prove the map $t\mapsto
x^-(t)= x(t,\omega(t))$ is absolutely continuous. The details can be found in \cite{BCZ}.

Thus, there exists a null 1-dimensional set $\N\subset [0,T]$
such that
\begi
\item[(i)]  For every $\tau\notin\N$ and $x\in \mathbb{R}$ one has $(\tau,x)\notin \N_1$\,;
\item[(ii)] If $\tau\notin \N$ then the map $t\mapsto \zeta(t)$ in (\ref{zdef})
is differentiable at $t=\tau$. Moreover, $\tau$
is a Lebesgue point of the derivative $\varsigma'$;
\item[(iii)] The functions $t\mapsto x^-(t)$ and
$t\mapsto\omega(t)$ are differentiable at each point   $\tau\in [0,T]\setminus\N$.
Moreover, each point $\tau \notin \N$ is a Lebesgue point of the derivatives $\dot x^-$ and $\dot\omega$.\endi

Let $\tau\not\in \N$.
 We claim  that the map $t\mapsto x^-(t)=x(t,\omega(t))$
satisfies the first equation in (\ref{dxpm}) at time $t=\tau$. Assume, on the contrary, that $\dot x^-(\tau)\not= \lambda_-\big(x^-(\tau),u(\tau, x^-(\tau))\big)$.
Without loss of generality, let
\begin{equation*}\label{ass}\dot x^-(\tau)~=~ \lambda_-\big(x^-(\tau),u(\tau, x^-(\tau))\big)+2\ve_0\end{equation*}
 for some $\ve_0>0$.   The case $\ve_0 <0$ is entirely similar.
To derive a contradiction we observe that, for all $t\in (\tau, \tau+\delta]$, with $\delta>0$ small enough one has
\bel{xpmt}X(t)~:=~
x^-(\tau) +(t-\tau) [\lambda_-\big(x^-(\tau),u(\tau, x^-(\tau))\big)+\ve_0 ]~<~x^-(t).\eeq
We also observe that if  $\vp$  is Lipschitz continuous with compact support then the identity in (\ref{mbl}) is still true.

For any $\epsilon>0$ small, we still use the Lipschitz function with compact support constructed in \cite{BCZ} as the test function, that is
 \begin{equation*}
 \varphi^{\epsilon}(s,y)~:=~\min\{ \varrho^{\epsilon}(s,y),
\,\chi^\epsilon(s)\},\end{equation*}
where
$$\rho^{\epsilon}(s,y)~:=q~\left\{\bega{cl} 0 \qquad &\hbox{if}\quad y
\leq -\epsilon^{-1},\cr
\epsilon^{-1}(y+\epsilon^{-1}) \qquad &\hbox{if}\quad -\epsilon^{-1}\leq y\leq
\epsilon-\epsilon^{-1},\cr
1\qquad &\hbox{if}\quad \epsilon-\epsilon^{-1} \leq y\leq X(s),\cr
1-\epsilon^{-1}(y-X(s))\qquad &\hbox{if}\quad  X(s)\leq y\leq X(s)+\epsilon,\cr
0\qquad &\hbox{if}\quad y\geq X(s)+\epsilon,\enda\right.$$
\begin{equation*}\label{timtest}\chi^\epsilon(s)~:=~\left\{\bega{cl} 0\qquad
&\hbox{if}\quad s\leq \tau-\epsilon,\cr
\epsilon^{-1}(s-\tau+\epsilon)\qquad &\hbox{if}\quad \tau-\epsilon\leq s\leq \tau,\cr
1\qquad &\hbox{if}\quad \tau\leq s\leq  t,\cr
1-\epsilon^{-1}(s-t) \qquad &\hbox{if}\quad t\leq s<t+\epsilon,\cr
0 \qquad &\hbox{if}\quad s\geq t+\epsilon.\enda\right.
\end{equation*}
Using $\vp^{\epsilon}$ as test function in the first equation in  (\ref{mbl}), we get
\bel{vpe}
\int \Big[\int (\vp^{\epsilon}_t+\lambda_-\vp^{\epsilon}_x)\,d\mu_-^t
+ \int G(t,x)\, \vp^{\epsilon}
 \, dx \Big]\,dt~=~0.
\eeq
Suppose $t$ is sufficiently close to $\tau$, then
for $s\in [\tau, \, t]$ and $x$ close to $x^-(\tau)$, we have
$$0 ~=~ \vp^{\epsilon}_t + [\lambda_-\big(x(\tau),u(\tau, x(\tau))\big)+\ve_0 ] \vp^{\epsilon}_x ~
\leq~\vp^{\epsilon}_t +\lambda_-\big(x, u(s,x)\big) \vp^{\epsilon}_x\,,$$
because  $\lambda_-\big(x,u(s,x)\big)<\lambda_-\big(x,u(\tau, x(\tau))\big)+\ve_0 $ and $\vp^{\epsilon}_x\leq 0$.
Since the family of measures $\mu_{-}^{t}$ depends continuously on $t$
in the topology of weak convergence, taking the limit of (\ref{vpe}) as
 $\epsilon\to 0$, for $\tau,t\notin\N$ we obtain
\bel{55}\bega{rl} 0&
\geq~
\ds\mu_-^\tau\Big((-\infty,~x^-(\tau)]\Big)-\mu_-^t\Big((-\infty,~X(t)]\Big)+ \int_\tau^t\int_{-\infty}^{X(s)}G(s,x)\,dxds.
\enda\eeq
In turn,  for $t\in (\tau,\tau+\delta]$, \eqref{xpmt} and (\ref{55})  implies
\bel{66}\begin{split} &\quad\mu_-^t\Big((-\infty,~x^-(t))\Big)~\geq~\mu_-^t\Big((-\infty,~X(t)]\Big)\\
&\geq~\ds\mu_-^\tau\Big((-\infty,~x^-(\tau)]\Big)
\ds+\int_\tau^t\int_{-\infty}^{x^-(s)}G(s,x)\,dxds+ o(t-\tau),\end{split}
\eeq
where $$ o(t-\tau)~:=~-\int_\tau^t \int_{X(s)}^{x^-(s)}G(s,y)\,dy ds,
$$
since $\tau\notin\N$, the last term is a higher order infinitesimal, satisfies
$\lim\limits_{t\to\tau}\frac{o(t-\tau)}{t-\tau}~=~0.$
Thus, for $t$ sufficiently to $\tau$, it follows from (\ref{66}) that
\bel{ccc}\begin{split}&
\omega(t) - \omega(\tau) ~\geq~
\bigg[x^-(t) + \mu_-^t\Big( (-\infty,\, x^-(t))\Big)\bigg] - \bigg[ x^-(\tau) +
 \mu_-^\tau\Big( (-\infty,\, x^-(\tau)]\Big)\bigg]\\
&
\qquad \geq~ \Big[ \lambda_-\bigl(x^-(\tau),u(\tau, x^-(\tau))\bigr)+\ve_0\Big](t-\tau) +
\int_\tau^t \int_{-\infty}^{x^-(s)}G(s,y)\,dy ds + o(t-\tau).
\end{split}
\eeq
Differentiating (\ref{ccc}) w.r.t.~$t$ at $t=\tau$, we find
$$\dot \omega(\tau) ~\geq~ \Big[ \lambda_-\bigl(x^-(\tau),u(\tau, x^-(\tau))\bigr)+\ve_0\Big]+
\int_{-\infty}^{x^-(\tau)}G(s,y)\,dy ds,$$
which is a contradiction
with (\ref{ceq5}). As a consequence, the first equation in  (\ref{dxpm}) must hold.
\vskip 0.2cm

\vskip 0.2cm
{\bf Step 5.}
Finally, we prove the uniqueness of $x^-(t)$.
Assume there are two different solution
$x^-_1(t)$ and $x^-_2(t)$ with $x^-_1(0)=x^-_2(0) = \bar y$, both satisfying the first equation in (\ref{dxpm}) together with (\ref{ceq1}).
Consider two functions $\omega_1(t)$ and $\omega_2(t)$ as
$$\omega_i(t)~=~x_i^-(t) + \int_{-\infty}^{\bar y}\Tilde{R}^2(0,x)\, dx
+\int_0^t\int_{-\infty}^{x_i^-(t)}G(s,x)\, dx\, ds\,,$$
for $i=1,2$. Then $x_i^-(t) = x(t,\omega_i(t))$, moreover
$\omega_1$ and $\omega_2$ satisfy (\ref{ceq3})
with the same initial data
$$\omega_1(0) ~=~ \omega_2(0)~=~\bar y + \int_{-\infty}^{\bar y}\Tilde{R}^2(0,x)\, dx\,.$$
The uniqueness result of $\omega$ proved in step {\bf 3} now implies~
$x_1^-(t) = x(t,\omega_1(t))=x(t,\omega_2(t))=x_2^-(t)$.
\end{proof}

\section{An equivalent semi-linear system}\label{sub_3.5}

In this section, we construct a semilinear system under characteristic independent variables $X,Y$, which are corresponding to $\omega(0)$ and $\upsilon(0)$ and taking constant values along forward and backward characteristics,  respectively.  We need to define some dependent variables in the $(X,Y)$ coordinates, and show that these variables satisfy a semi-linear system whose solution is unique. In turn, this provides a direct proof of the uniqueness of conservative solutions to
(\ref{vwl})--(\ref{ID}) with general initial data  $u_0\in H^1({\mathbb{R}}),\  u_1\in L^2({\mathbb{R}})$.

 Let $u=u(t,x)$ be a conservative solution of (\ref{vwl})--\eqref{ID}. For any couple $(X,Y)\in\mathbb{R}^2$, a unique point $(t,x)$ can be determined as follows.
Choose points $\bar x=x_0(X)$ and $\bar y=y_0(Y)$ such that
\begin{equation*}\label{xy}
X~=~\bar x +\int_{-\infty}^{\bar x}\Tilde{R}^2(0,x)dx,\qquad
\qquad Y~=~\bar y +\int_{-\infty}^{\bar y}\Tilde{S}^2(0,x)dx.\end{equation*}
Using Lemma \ref{un_lem}, there exists a unique backward characteristic $t\mapsto x^-(t,\bar x)$
starting at $\bar x$, and a unique forward characteristic $t\mapsto x^+(t,\bar y)$
starting at $\bar y$.
Without loss of generality, we assume that $\bar x\geq\bar y$, then define $(t(X,Y), x(X,Y))$ be the unique point
where these two characteristics cross, namely
\begin{equation*}\label{xtXY}x^-(t(X,Y),\,\bar x) ~=~x^+(t(X,Y),\,\bar y)~=~ x(X,Y),\end{equation*}
and the function $u(X,Y)$ is defined by
\begin{equation*}\label{uXY}
u(X,Y)~:=~u\bigl(t(X,Y), x(X,Y)\bigr).\end{equation*}
From the above definitions, we can state the following lemma, using a similar method as in \cite{BCZ}, we omit the proof here for brevity.
\begin{Lemma}\label{nlip_lem}
The map $(X,Y)\to (t,x,u)(X,Y)$ is  locally Lipschitz continuous.\end{Lemma}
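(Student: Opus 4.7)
My plan is to establish local Lipschitz continuity by factoring the map as
\[
(X,Y)~\longmapsto~(\bar x,\bar y)~\longmapsto~(t,x)~\longmapsto~ u,
\]
and controlling each arrow separately. For the first arrow, the definitions $X=\bar x+\int_{-\infty}^{\bar x}\Tilde R^2(0,x)\,dx$ and $Y=\bar y+\int_{-\infty}^{\bar y}\Tilde S^2(0,x)\,dx$ are monotone nondecreasing and expand distances because their integrands are nonnegative, so the generalized inverses $X\mapsto\bar x$, $Y\mapsto\bar y$ are $1$-Lipschitz. This is exactly the one-sided argument of Step~1 of Lemma \ref{xy_lem} specialized to $t=0$.

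For the second arrow, Lemma \ref{un_lem} produces unique characteristics $t\mapsto x^\pm(t)$, Lipschitz in $t$ with speed bounded by $\overline N$; their Lipschitz dependence on the starting points follows from the weighted-metric contraction used in Step~3 of the proof of Lemma \ref{un_lem}, applied to a pair of initial data. The intersection $(t,x)(X,Y)$ is then Lipschitz in $(\bar x,\bar y)$ because the two families are uniformly transverse: by \eqref{con} and \eqref{not1}, $\lambda_+-\lambda_-\geq 2\gamma_1/\alpha_2>0$, so the scalar function $F(t):=x^-(t,\bar x)-x^+(t,\bar y)$ starts at $\bar x-\bar y\geq 0$ and has upper time-derivative bounded above by $-2\gamma_1/\alpha_2$, whence its unique zero is Lipschitz in $(\bar x,\bar y)$.

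The third arrow is the delicate one, because $(t,x)\mapsto u$ is only H\"older $1/2$ by Theorem \ref{thm_ex} and naive composition would give only H\"older regularity. The key point is that the $(X,Y)$ coordinates absorb the energy concentrations. Fixing $Y$ and moving along the forward characteristic $x=x^+(t,\bar y)$, the intrinsic identity \eqref{xa} together with the balance law \eqref{balance} gives at generic points
\[
\partial_X t\big|_Y=\frac{1}{\lambda_++(\lambda_+-\lambda_-)\,\Tilde R^2+\int_{-\infty}^{x}G\,d\xi},\qquad \partial_X u\big|_Y=\frac{R/\alpha}{\lambda_++(\lambda_+-\lambda_-)\,\Tilde R^2+\int_{-\infty}^{x}G\,d\xi}.
\]
The structural bound $R^2=\frac{c_2-c_1}{-c_1}\Tilde R^2\leq\underline M\,\Tilde R^2$ from \eqref{not2} yields $|R|\leq\sqrt{\underline M\,\Tilde R^2}$, after which the elementary inequality $\sqrt{r}/(a+br)\leq 1/(2\sqrt{ab})$ for $a,b,r\geq 0$ gives a uniform ceiling on $|\partial_X u|_Y|$ depending only on the constants of \eqref{con} and on $E_0$. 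Uniform bounds on $\partial_X t|_Y$ and $\partial_X x|_Y=\lambda_+\,\partial_X t|_Y$ follow immediately from the denominator, and the symmetric computations for $\partial_Y(\cdot)|_X$ close the argument.

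The main obstacle is making the differentiation in the third step rigorous at times when the measures $\mu_\pm^t$ carry singular parts, so that the identity \eqref{xa} only holds modulo the indeterminate $\theta\in[0,1]$. This is handled by plugging suitable Lipschitz test functions (of the type built in Step~4 of the proof of Lemma \ref{un_lem}) into the measure-valued balance laws \eqref{mbl}, passing to the limit, and verifying that the atomic contributions of $\mu_-^t$ are exactly those absorbed by the $(\lambda_+-\lambda_-)\Tilde R^2$ term of the denominator; the uniform upper bound then persists. Everything else is routine.
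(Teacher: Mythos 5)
Your overall strategy (pass to the energy coordinates so that concentrations are absorbed, use the weighted‑metric contraction for the characteristics, use transversality for the intersection point) is the right one, but as written the argument has two genuine gaps. First, the middle arrow of your factorization is false: the map $(\bar x,\bar y)\mapsto (t,x)$ is \emph{not} Lipschitz in general. Indeed, by \eqref{sls1} and Lemma \ref{var_lem} one has $x_X=\frac{c_2}{c_2-c_1}\sigma p$ with $\sigma=1/(1+\Tilde{R}^2)$, so $\partial x/\partial\bar x=x_X\cdot(1+\Tilde{R}^2(0,\bar x))$, which is unbounded wherever the initial energy density $\Tilde{R}^2(0,\cdot)\in L^1$ is large. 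The weighted‑metric estimate of Step 3 of Lemma \ref{un_lem} controls the separation of two backward characteristics by $e^{C_0t}\,d^{(0)}(\bar\omega_1,\bar\omega_2)\sim |X_1-X_2|$, \emph{not} by $|\bar x_1-\bar x_2|$; combined with the $1$‑Lipschitz property of $\omega\mapsto x(t,\omega)$ from Lemma \ref{xy_lem}, this gives $\sup_t|x^-(t,\cdot)_1-x^-(t,\cdot)_2|\le C|X_1-X_2|$. Your transversality argument is fine, but it must be fed this bound; the stop at $(\bar x,\bar y)$ has to be removed and the whole estimate run in the $(X,Y)$ variables.

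Second, and more seriously, the displayed formulas for $\partial_Xt|_Y$ and $\partial_Xu|_Y$ are neither derived nor correct. The true relations are $t_X=\frac{\alpha}{c_2-c_1}\sigma p=\frac{p}{(\lambda_+-\lambda_-)(1+\Tilde{R}^2)}$ and $u_X=\frac{\xi p}{c_2-c_1}$ with $\xi=R/(1+\Tilde{R}^2)$; your denominator $\lambda_++(\lambda_+-\lambda_-)\Tilde{R}^2+\int_{-\infty}^{x}G$ is the rate of change of $\omega(t)$ along the forward characteristic, i.e.\ it computes $\partial t/\partial\omega$ at time $t$ and omits precisely the factor $p=\partial\omega/\partial\bar\omega$ relating the time‑$t$ energy coordinate to $X$. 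Moreover $\int_{-\infty}^{x}G\,d\xi$ is only integrable in time, so that denominator is not pointwise bounded away from zero and the inequality $\sqrt r/(a+br)\le 1/(2\sqrt{ab})$ cannot be applied with uniform constants. The local boundedness and positivity of $p$ is exactly the nontrivial content here, and your proof never establishes it. The argument the paper relies on (following \cite{BCZ}) avoids pointwise derivative formulas altogether: along a fixed forward characteristic one writes $u(P_2)-u(P_1)=\lim_{\epsilon\to0}\frac1\epsilon\iint_{\Gamma^\epsilon_{12}}\frac{R}{\alpha}\,dx\,dt$ as in part (iv) of the proof of Theorem \ref{equ_thm}, applies Cauchy--Schwarz with $R^2\le\underline{M}\,\Tilde{R}^2$ from \eqref{not2}, and observes that both the backward energy crossing the characteristic between the two points and their horizontal separation are bounded by $C|X_2-X_1|$ via the Gronwall estimate; this yields the Lipschitz bound directly, with the symmetric computation in $Y$.
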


In addition, we give a comment.
\begin{Remark}\label{cha_rem}
From the above arguments and the Rademacher's theorem, we see that the map
\begin{equation*}
\Lambda:(X,Y)\mapsto (t(X,Y), \, x(X,Y))\end{equation*}
 is a.e.~differentiable.  Then we can denote the set $\Omega$ of critical points and the set $ V$ of critical values of $\Lambda$ by
\bel{critp}
\Omega~:=~\Big\{ (X,Y)\,;~~\hbox{either $D\Lambda(X,Y)$ does not exists,
or else $\det\, D\Lambda(X,Y)=0$}\Big\},\eeq
and
$V:=\big\{\Lambda(X,Y)\,;~~(X,Y)\in \Omega\big\}.$
By the area formula \cite{Zi}, the 2-dimensional measure of  $V$ is zero.
We emphasis that the map $\Lambda:\mathbb{R}^2\mapsto\mathbb{R}^2$ is onto but not
one-to-one. However,
for each $(t_0,x_0)\notin V$, there exist a unique point $(X,Y)$ such that $\Lambda(X,Y) = (t_0, x_0)$.

For future use, we record the change of variable formula. For any function  $f\in\L^1(\mathbb{R}^2)$,
the composition
$\tilde f(X,Y) = f(\Lambda(X,Y))$
is well defined at a.e.~point $(X,Y)\in \mathbb{R}^2\setminus\Omega$, thus we obtain
\bel{icv}\int_{\R^2} f(t,x)\, dxdt~=~\int_{\mathbb{R}^2\setminus\Omega} \tilde f(X,Y)
\cdot |\det D\Lambda(X,Y)|\, dXdY,\eeq
here the determinant of the Jacobian matrix $D\Lambda$ is calculated as
\begin{equation*}
\det D\Lambda~=~\det\left(\bega{ccc} t_X & & t_Y\cr
\cr
x_X&& x_Y  \enda\right) ~=~
(\frac{1}{c_2}-\frac{1}{c_1})\alpha
x_X x_Y,
\end{equation*}
with $
x_X=\frac{c_2}{\alpha}t_X$ and $x_Y=\frac{c_1}{\alpha}t_Y\,.$
\end{Remark}

We now introduce more variables. Fixed the initial values $\bar \omega, \bar \upsilon$, and denote $t\mapsto\omega(t,\bar \omega)$ and $t\mapsto \upsilon(t,\bar \upsilon)$ as the unique solutions to \eqref{uweqn}.
We then introduce a new couple of dependent variables $p$ and $q$ as functions of $X$ and $Y$ by
\bel{pdef}
p(X,Y) ~:=~{\partial \over \partial \bar\omega} \omega(\tau,\bar\omega)
\bigg|_{\bar\omega=X,\, \tau = t(X,Y)}\,,\qquad
q(X,Y)~:=~{\partial \over \partial \bar\upsilon} \upsilon(\tau,\bar\upsilon)\bigg|_{\bar\upsilon=Y,
\, \tau = t(X,Y)}\,.\eeq
Moreover, calling that $t\mapsto x^-(t) = x(t,\omega(t))$ and $t\mapsto x^+(t)
=y(t,\upsilon(t))$ are the  unique  backward  and forward characteristics
starting from the points $x(0, \bar\omega)$ and $y(0, \bar\upsilon)$, respectively.
Also, recall the definitions of the maps $\omega\mapsto x(t,\omega)$
and $\upsilon\mapsto y(t,\upsilon)$ in (\ref{xa})--(\ref{yb}), we further introduce a couple of variables describing the feature of characteristics, namely
\bel{nedef}
\sigma(X,Y)~:=~\frac{\partial x}{\partial \omega}(t(X,Y),\omega(t,x(X,Y))),\qquad \eta(X,Y)~:=~\frac{\partial x}{\partial \upsilon}(t(X,Y),\upsilon(t,x(X,Y)))\,.\eeq
In addition, we introduce a new set of variables by setting
\bel{tvt}
\xi(X,Y) ~:=~{{c_2-c_1}\over p(X,Y)} u_{X}(X,Y)\,,\qquad
\zeta(X,Y) ~:=~{{c_2-c_1}\over q(X,Y)} u_{Y}(X,Y)\,.\eeq
Using Rademacher's theorem and Lemma \ref{xy_lem} and Lemmas \ref{nlip_lem}, we see that the above derivatives are a.e.~well defined. Moreover,
$$p(X,Y) ~=~q(X,Y)~=~1\qquad\hbox{if}\qquad t(X,Y)~=~0.$$
Our present purpose is to prove that these variables satisfy the following semi-linear system
with smooth coefficients in $(X,Y)$ coordinates
\bel{sls1}
\begin{cases}
\ds u_{X}=\frac{\xi p}{c_2-c_1},\\
\ds u_{Y}=\frac{\zeta q}{c_2-c_1}, \end{cases}\quad
\begin{cases}\ds x_X=\frac{c_2}{c_2-c_1}\sigma p,\\ \ds x_Y=\frac{c_1}{c_2-c_1}\eta q, \end{cases}\quad
\begin{cases}\ds t_X=\frac{\alpha}{c_2-c_1}\sigma p,\\  \ds t_Y=\frac{\alpha}{c_2-c_1}\eta q,
\end{cases}
\eeq
\bel{sls2}
\begin{cases}
\ds p_Y=\frac{\alpha\partial_x c_1-c_1\partial_x\alpha}{\alpha(c_2-c_1)}pq\eta\sigma+
\frac{2(c_1a_2-c_2a_1)}{c_2(c_2-c_1)}\xi\eta pq
+\frac{2a_1(c_1+c_2)}{c_1(c_2-c_1)}\zeta\sigma pq\\
\ds\qquad-\frac{2c_2a_1}{c_1(c_2-c_1)}\zeta pq-\frac{2c_1a_2}{c_2(c_2-c_1)}\xi pq-\frac{2c_1c_2b}{(c_2-c_1)^2}\xi\zeta pq,\\
\ds q_X=\frac{\alpha\partial_x c_2-c_2\partial_x\alpha}{\alpha(c_2-c_1)}pq\eta\sigma+
\frac{2(c_1a_2-c_2a_1)}{c_1(c_2-c_1)}\sigma\zeta pq
-\frac{2a_2(c_1+c_2)}{c_2(c_2-c_1)}\xi\eta pq\\
\ds\qquad+\frac{2c_2a_1}{c_1(c_2-c_1)}\zeta pq+\frac{2c_1a_2}{c_2(c_2-c_1)}\xi pq+\frac{2c_1c_2b}{(c_2-c_1)^2}\xi\zeta pq,\\
\end{cases}
\eeq
\bel{sls3}
\begin{cases}
\ds \sigma_Y=\frac{\alpha\partial_x c_1-c_1\partial_x\alpha}{\alpha(c_2-c_1)}\eta\sigma(1-\sigma)q
+\frac{2a_1(c_1+c_2)}{c_1(c_2-c_1)}\zeta\sigma(1-\sigma)q\\
\ds\qquad+\frac{2c_1a_2}{c_2(c_2-c_1)}\xi\sigma(1-\eta)q
+\frac{2a_1}{c_2-c_1}\xi\eta(\sigma-1)q+
\frac{2c_1c_2b}{(c_2-c_1)^2}\zeta\xi\sigma q,\\
\ds \eta_X=\frac{\alpha\partial_x c_2-c_2\partial_x\alpha}{\alpha(c_2-c_1)}\eta\sigma(1-\eta)p
+\frac{2a_2(c_1+c_2)}{c_2(c_2-c_1)}\xi\eta(\eta-1)p\\
\ds\qquad+\frac{2c_2a_1}{c_1(c_2-c_1)}\zeta\eta(\sigma-1)p
+\frac{2a_2}{c_2-c_1}\zeta\sigma(1-\eta)p-
\frac{2c_1c_2b}{(c_2-c_1)^2}\zeta\xi\eta p,\\
\end{cases}
\eeq
\bel{sls4}
\begin{cases}
\displaystyle \xi_{Y}=\frac{a_1q}{c_1}(\eta-\sigma\eta)+\frac{a_2q}{c_2}(\sigma-\sigma\eta)
+(a_1-a_2+\frac{2c_2a_1}{c_1})\frac{\xi\zeta q}{c_2-c_1}+\frac{c_2b}{c_2-c_1}\sigma\zeta q\\
\ds\qquad+(d_1+\frac{c_1\partial_x c_2-c_2\partial_xc_1}{c_2-c_1})\frac{\eta\xi q}{c_2-c_1}-\frac{\alpha\partial_x c_1-c_1\partial_x\alpha}{\alpha(c_2-c_1)}\eta\xi\sigma q-\frac{2a_1(c_1+c_2)}{c_1(c_2-c_1)}\xi\sigma\zeta q\\
\ds\qquad-\frac{2(c_1a_2-c_2a_1)}{c_2(c_2-c_1)}\xi^2\eta q+\frac{2c_1a_2}{c_2(c_2-c_1)}\xi^2 q+\frac{2c_1c_2b}{(c_2-c_1)^2}\xi^2\zeta q,\\
\displaystyle \zeta_{X}=\frac{a_1p}{c_1}(\eta-\sigma\eta)+\frac{a_2p}{c_2}(\sigma-\sigma\eta)
+(a_1-a_2-\frac{2c_1a_2}{c_2})\frac{\xi\zeta p}{c_2-c_1}+\frac{c_1b}{c_2-c_1}\xi\eta p\\
\ds\qquad+(d_2+\frac{c_1\partial_x c_2-c_2\partial_x c_1}{c_2-c_1})\frac{\sigma\zeta p}{c_2-c_1}-\frac{\alpha\partial_x c_2-c_2\partial_x\alpha}{\alpha(c_2-c_1)}\eta\zeta\sigma p+\frac{2a_2(c_1+c_2)}{c_2(c_2-c_1)}\xi\eta\zeta p\\
\ds\qquad-\frac{2(c_1a_2-c_2a_1)}{c_1(c_2-c_1)}\zeta^2\sigma p-\frac{2c_2a_1}{c_1(c_2-c_1)}\zeta^2 p-\frac{2c_1c_2b}{(c_2-c_1)^2}\zeta^2\xi p.
\end{cases}
\eeq
Below, we state the main theorem of this section.
\begin{Theorem}\label{equ_thm}
By possibly changing the functions $p,q,\sigma,\eta,\xi,\zeta$
on a set of measure zero in the $X$-$Y$ plane, the following holds.
\begi
\item[(i)] For a.e.~$X_0\in \mathbb{R}$, the functions $t,x,u,p,\sigma,\xi$ are absolutely continuous on every vertical segment of the form $\{ (X_0,Y)\,;~~a<Y<b\}$.   Their partial derivatives w.r.t.~$Y$
satisfy a.e.~the corresponding equations in (\ref{sls1})--(\ref{sls4}).

\item[(ii)] For a.e.~$Y_0\in \mathbb{R}$, the functions $t,x,u,q,\eta,\zeta$ are
absolutely continuous on every horizontal segment of the form
$ \{ (X,Y_0)\,;~~a<X<b\}$.   Their partial derivatives w.r.t.~$X$
satisfy a.e.~the corresponding equations in (\ref{sls1})--(\ref{sls4}).
\endi
\end{Theorem}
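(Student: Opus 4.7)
The overall plan is to first establish the a.e.\ regularity statements, and then derive the four groups of equations by combining the definitions of $p,q,\sigma,\eta,\xi,\zeta$ with the characteristic equations \eq{dxpm}, the integral identity \eq{ceq3}, the balance laws \eq{mbl}, and the Riemann-variable system \eq{R-S-eqn}. The key technical point is that although $u(t,x)$ is only H\"older continuous, all six auxiliary variables are derivatives (in an appropriate sense) of Lipschitz functions in the $(X,Y)$ coordinates, so Rademacher's theorem applies. By Lemma \ref{nlip_lem}, the map $(X,Y)\mapsto (t,x,u)$ is locally Lipschitz, and by Lemma \ref{xy_lem} together with \eq{pdef}--\eq{nedef}, the defining maps $\bar\omega\mapsto \omega(\cdot,\bar\omega)$, $\omega\mapsto x(t,\omega)$ and their forward-characteristic analogues are Lipschitz in each variable. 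Thus, after a redefinition on a planar null set, $p,q,\sigma,\eta,\xi,\zeta$ are a.e.\ well defined, and Fubini's theorem gives absolute continuity along $Y$-slices $\{X=X_0\}$ for a.e.\ $X_0$ (and symmetrically for $X$-slices).

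The equations \eq{sls1} are purely algebraic and follow from the chain rule. Since $X=\bar\omega$ is preserved along backward characteristics and $Y=\bar\upsilon$ along forward ones, the identities $x_X=\sigma p$, $x_Y=\eta q$ come directly from \eq{pdef}--\eq{nedef}. The characteristic equations \eq{dxpm} give $\dot x^-=\lambda_-=c_1/\alpha$ and $\dot x^+=\lambda_+=c_2/\alpha$, so $t_X$ and $t_Y$ are obtained from $x_X,x_Y$ by multiplication by $\alpha/c_2$ and $\alpha/c_1$ respectively. Rewriting these as the form in \eq{sls1} uses the elementary identity $1/c_2-1/c_1=(c_1-c_2)/(c_1 c_2)$. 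The relations $u_X=\xi p/(c_2-c_1)$ and $u_Y=\zeta q/(c_2-c_1)$ are simply the definitions in \eq{tvt}.

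For \eq{sls2}, I would differentiate the integral equation \eq{ceq3} for $\omega(t,\bar\omega)$ with respect to the initial value $\bar\omega$. This produces an ODE along the backward characteristic for $\partial_{\bar\omega}\omega$ whose right-hand side splits into a term from $\lambda_-$ (giving the $\partial_x c_1,\partial_x\alpha$ contributions through differentiation of $\lambda_-(x,u)$) and a term from the integral of $G$ (giving the $a_1,a_2,b$ pieces). Since moving in $Y$ with $X$ fixed is moving along a backward characteristic, $\partial_Y p$ equals this derivative times $t_Y$, and using the already-derived $t_Y=\frac{\alpha}{c_2-c_1}\eta q$ reproduces exactly the coefficient structure in the stated $p_Y$ equation; the $q_X$ equation is obtained symmetrically from \eq{ceq2}. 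The companion equations \eq{sls3} for $\sigma$ and $\eta$ come from differentiating \eq{xa}--\eq{yb} (the implicit definitions of $x(t,\omega)$ and $y(t,\upsilon)$) once more, exploiting the mixed-partials relation $\partial_Y\sigma=\partial_X\eta$ that follows from $(x_X)_Y=(x_Y)_X$ in conjunction with \eq{sls1} and the just-derived $p_Y,q_X$ formulas.

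The hardest step, and the one I expect to be the main obstacle, is \eq{sls4}. Formally one substitutes the identities from \eq{sls1} into \eq{R-S-eqn}, which gives scalar ODEs for $R$ and $S$ along the respective characteristics; then one rewrites $R,S$ in terms of $\xi,\zeta$ through \eq{tvt} and uses the expressions for $p_Y, q_X, \sigma_Y, \eta_X$ already derived. The difficulty is that $R$ and $S$ are not functions for which $R_Y$ or $S_X$ make classical sense at points of energy concentration; one must instead derive these equations in the distributional sense for $\xi,\zeta$ and then use the semi-linear structure plus the a.e.\ differentiability from Rademacher's theorem to upgrade them to pointwise a.e.\ identities. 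Concretely, I would test the balance law \eq{mbl} against a shrinking family of Lipschitz cutoffs adapted to a backward characteristic (as done in Step~4 of the proof of Lemma~\ref{un_lem}), recover the relation $\mathrm{d}\xi/\mathrm{d} Y$ along $X=$const, and then collect the lower-order terms from $a_i,b,d_i,\partial_x c_i,\partial_x\alpha$ exactly in the form written in \eq{sls4}. Because the construction mirrors the existence proof of \cite{H} in reverse, the bookkeeping is heavy but each coefficient is pinned down by matching the corresponding term in \eq{R-S-eqn} against the chain-rule factors $\sigma p,\eta q,\alpha/(c_2-c_1)$ produced by \eq{sls1}.
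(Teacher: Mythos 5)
Your overall architecture (reverse the existence construction; derive each block of equations from integral identities tied to the characteristics and the balance laws) matches the paper's, and your treatment of \eqref{sls1} and your plan for \eqref{sls4} are essentially the paper's Steps (1)--(2) and (5). But there are two genuine gaps. First, the regularity claim at the start does not go through as stated: $p,q,\sigma,\eta,\xi,\zeta$ are themselves (weak) derivatives, not Lipschitz functions of $(X,Y)$, so Rademacher's theorem does not apply to them, and ``a.e.\ differentiability plus Fubini'' does not yield absolute continuity on a.e.\ vertical or horizontal slice (a function can be a.e.\ differentiable on every slice and still fail the fundamental theorem of calculus there). The paper's entire proof is organized around Lemma \ref{tec_lem}: for each variable one verifies an \emph{integral} identity of the form $\int_{Y_1}^{Y_2}[v(X_2,Y)-v(X_1,Y)]\,dY=\iint f\,dX\,dY$ over arbitrary rectangles --- obtained from \eqref{uweqn}, the change-of-variables formula \eqref{icv}, and the expressions of $\sigma,\eta,\xi,\zeta$ in terms of $\Tilde R^2,\Tilde S^2$ in Lemma \ref{var_lem} --- and only then concludes absolute continuity on a.e.\ segment together with the pointwise a.e.\ equation. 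Your proposal for \eqref{sls2} (``differentiate the integral equation \eqref{ceq3} with respect to $\bar\omega$ to get an ODE along the characteristic'') is exactly the formal computation this machinery is designed to replace; without the rectangle identities and Lemma \ref{tec_lem} the differentiation is not justified at points of energy concentration.

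Second, the route you propose for \eqref{sls3} fails. The identity $\partial_Y\sigma=\partial_X\eta$ is false: from \eqref{sls1} one has $x_X=\frac{c_2}{c_2-c_1}\sigma p$ and $x_Y=\frac{c_1}{c_2-c_1}\eta q$, so the (distributional) symmetry $(x_X)_Y=(x_Y)_X$ relates $(\sigma p)_Y$ to $(\eta q)_X$ through variable coefficients and the extra factors $p$ and $q$; it does not equate $\sigma_Y$ with $\eta_X$, and indeed the two right-hand sides in \eqref{sls3} are visibly different. Moreover a single mixed-partials relation is one scalar equation in the two unknowns $(\sigma p)_Y$ and $(\eta q)_X$, so even the corrected version cannot determine both (one could try to add the analogous relation for $t$, but that is not what you wrote and is not the paper's argument). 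The paper instead computes $(p\sigma)_Y$ \emph{directly}: it recognizes $p\sigma=\partial x(\tau,X)/\partial X$, writes the difference of this quantity at $Y_1$ and $Y_2$ as a double integral of $\frac{\alpha\partial_x c_1-c_1\partial_x\alpha}{\alpha^2}-\frac{2a_1}{\alpha}(R-S)$ over $\Lambda(\D)$, changes variables, applies Lemma \ref{tec_lem} to get $(p\sigma)_Y$, and then subtracts $\sigma p_Y$ using the already-established \eqref{pYeq}. You need this (or an equivalent independent identity for $(p\sigma)_Y$) to close the argument.
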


In order to achieve this result, we begin with the following technical result, c.f. \cite{BCZ}.
\begin{Lemma}[\cite{BCZ}]\label{tec_lem}
 Let $\Gamma = \, (a,b)\, \times\, (c,d)\,$
be a rectangle  in the $X$-$Y$ plane.

{\rm (i)} Assume that $u\in \L^\infty (\Gamma)$ has a weak partial derivative with respect to $X$. That means $\int_\Gamma (u\varphi_X+f\varphi)\,dX\,dY=0$ for some $f\in L^1(\Gamma)$ and all test functions $\varphi\in C_c^\infty(\Gamma). $ Then, by possibly modifying $u$ on a set of measure zero, the following holds.
For a.e.~$Y_0\in \,(c,d)\,$, the map $X\mapsto u(X, Y_0)$ is absolutely continuous
and
\begin{equation*}
{\partial \over\partial X} u(X,Y_0)~=~f(X, Y_0)\qquad\qquad\hbox{for a.e.~} X\in \,(a,b)\,.
\end{equation*}

{\rm (ii)} Assume that $u\in \L^\infty (\Gamma)$
and $f\in \L^1(\Gamma)$.
Moreover assume that there exists null sets $\N_X\subset\,(a,b)\,$ and
$\N_Y\subset\,(c,d)\,$ such that, for every $\ov X_1,\ov X_2\notin \N_X$ and $\ov Y_1,\ov Y_2\notin \N_Y$ with $\ov X_1<\ov X_2$ and
$\ov Y_1<\ov Y_2$, one has
\begin{equation*}
\int_{\ov Y_1}^{\ov Y_2} \Big[ u(\ov X_2,Y) - u(\ov X_1,Y)\Big]\, dY ~=~\int_{\ov Y_1}^{\ov Y_2}
\int_{\ov X_1}^{\ov X_2} f(X,Y)\, dXdY\,.
\end{equation*}
Then the conclusion of (i) holds.
\end{Lemma}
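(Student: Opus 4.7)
The plan is to reduce both statements to the classical one-dimensional fact that a measurable function on an interval whose distributional derivative lies in $L^1$ coincides almost everywhere with an absolutely continuous function whose classical derivative equals the distributional one, while carefully managing null sets uniformly in the transverse variable.

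For part~(i), I would test the weak-derivative identity against product functions $\varphi(X,Y)=\psi(X)\phi(Y)$ with $\psi\in C_c^\infty(a,b)$ and $\phi\in C_c^\infty(c,d)$. Fubini's theorem turns the identity into
\begin{equation*}
\int_c^d\phi(Y)\Big\{\int_a^b\bigl[u(X,Y)\psi'(X)+f(X,Y)\psi(X)\bigr]\,dX\Big\}\,dY=0,
\end{equation*}
so by the fundamental lemma of the calculus of variations, for each fixed $\psi$ the inner bracket vanishes for a.e.~$Y$. Taking a countable dense family $\{\psi_k\}\subset C_c^\infty(a,b)$ and intersecting the resulting null sets produces a single null set $\N_Y\subset(c,d)$ outside of which the slice $u(\cdot,Y)$ has distributional $X$-derivative equal to $f(\cdot,Y)\in L^1(a,b)$. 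Classical one-dimensional Sobolev theory then provides, for each such $Y$, an absolutely continuous representative $\tilde u(\cdot,Y)$ of $u(\cdot,Y)$ whose classical derivative equals $f(\cdot,Y)$ a.e.; for $Y\in\N_Y$ we leave $u$ unchanged. By Fubini this redefinition alters $u$ only on a set of two-dimensional measure zero and yields the desired absolutely continuous slices.

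For part~(ii), the strategy is to recover the weak-derivative identity from the integral hypothesis and then invoke part~(i). For any $\bar Y_1<\bar Y_2$ with $\bar Y_1,\bar Y_2\notin\N_Y$, set $\Phi(X):=\int_{\bar Y_1}^{\bar Y_2}u(X,Y)\,dY$ and $g(X):=\int_{\bar Y_1}^{\bar Y_2}f(X,Y)\,dY$. Then $g\in L^1(a,b)$ by Fubini, and the hypothesis reads $\Phi(\bar X_2)-\Phi(\bar X_1)=\int_{\bar X_1}^{\bar X_2}g(X)\,dX$ for all $\bar X_1,\bar X_2\notin\N_X$; since $\N_X$ is null, passing to the continuous representative makes $\Phi$ absolutely continuous with $\Phi'=g$ a.e. Given $\psi\in C_c^\infty(a,b)$ and $\phi\in C_c^\infty(c,d)$, I would approximate $\phi$ uniformly by step functions whose jump points lie outside $\N_Y$; applying the AC relation just obtained on each subinterval and integrating by parts in $X$ shows $\int_\Gamma(u\varphi_X+f\varphi)\,dXdY=0$ for $\varphi=\psi\phi$. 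By tensor-product density in $C_c^\infty(\Gamma)$ (applied in a norm strong enough to preserve $\int u\varphi_X$ and $\int f\varphi$, e.g.~uniform convergence of $\varphi$ and $\varphi_X$), the identity extends to every $\varphi\in C_c^\infty(\Gamma)$, and part~(i) delivers the conclusion.

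The main obstacle is uniform bookkeeping of null sets. In (i) the exceptional $Y$-set depends on the chosen test $\psi$, so obtaining one null set valid for every $\psi$ requires a countable dense family together with the appropriate topology on $C_c^\infty(a,b)$. In (ii) every approximation step must stay on the good grids: the jump points of the step approximations of $\phi$ must lie outside $\N_Y$, and the endpoints of $\Phi$ must avoid $\N_X$. These measure-theoretic technicalities are standard but must be arranged with care, so that after a single modification of $u$ on a two-dimensional null set absolute continuity holds simultaneously on almost every slice.
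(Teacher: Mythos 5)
The paper does not actually prove this lemma --- it is quoted verbatim from \cite{BCZ} --- and your proposal correctly reconstructs the standard argument used there: Fubini slicing plus the classical one-dimensional characterization of functions with $L^1$ distributional derivative (with a countable dense family of test functions $\psi_k$ to fix a single exceptional null set in $Y$) for part (i), and recovery of the weak-derivative identity from the hypothesis on good grid points (step-function approximation of $\phi$ with jumps off $\mathcal{N}_Y$, integration by parts in $X$ applied to the primitives $\Phi$) for part (ii). The only remark worth adding is that the tensor-density step at the end of (ii) is dispensable: since your proof of (i) only ever tests against product functions $\psi(X)\phi(Y)$, establishing the identity for products already suffices to run the slicing argument, so no approximation of general $\varphi\in C_c^\infty(\Gamma)$ by sums of products is needed.
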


For future reference, we express the variables $\sigma,\eta,\xi,\zeta$ in terms of $\Tilde{R}$ and $\Tilde{S}$. Indeed, we can prove the following results, using very similar method as in \cite{BCZ}. We omit the proof here for simplicity. For the sake of convenience, we denote a ``good" set in  $(X,Y)$ plane
\begin{equation*}\label{gset}\G ~:=~\mathbb{R}^2\setminus\Omega\,.\end{equation*}
with $\Omega$ defined in \eqref{critp}.
\begin{Lemma}\label{var_lem}

{\rm (i)}  For $(X,Y)\in \G $, it holds that
\begin{equation*}
 \left\{\bega{rl}\displaystyle{p(X,Y)\over x_X(X,Y)} &=\displaystyle\frac{c_2-c_1}{c_2}(1+\Tilde{R}^2),\\
\displaystyle {q(X,Y)\over x_Y(X,Y)}&=\displaystyle\frac{c_2-c_1}{c_1}(1+\Tilde{S}^2),
\enda\right.\quad
\left\{\bega{rl}\sigma(X,Y)&=\displaystyle\frac1{1+\Tilde{R}^2},\\ \eta(X,Y)&=\displaystyle\frac1{1+\Tilde{S}^2}\,,
\enda\right.\quad
\left\{\bega{rl}\xi(X,Y)&=~\displaystyle\frac{R}{1+\Tilde{R}^2},\\ \zeta(X,Y)&=\displaystyle\frac{S}{1+\Tilde{S}^2},
\enda\right.
 \end{equation*}
where the right hand sides are evaluated at the point $(t(X,Y),x(X,Y))$.

{\rm (ii)} For a.e.~$(X,Y)\in\Omega$, one has
$$
\sigma(X,Y)=\eta(X,Y)=0
\quad{\rm and}\quad
\xi(X,Y)=\zeta(X,Y)=0.
$$
\end{Lemma}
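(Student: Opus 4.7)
The plan is to express $p,q,\sigma,\eta,\xi,\zeta$ in closed form by applying the chain rule to the composite map $(X,Y)\mapsto(t(X,Y),x(X,Y))\mapsto u$, using the characteristic structure \eqref{dxpm}, the integral equation \eqref{ceq5} from Lemma \ref{un_lem}, and the balance laws \eqref{balance}. Part (i) will be established on the good set $\G$ where $\Lambda$ is non-degenerate; part (ii) will then follow by a null-set redefinition that reflects the atomic parts of $\mu_\pm^t$.

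For part (i), I would first handle $\sigma$: at a time $t$ where $\mu_-^t$ is absolutely continuous, the relation $\omega = x + \int_{-\infty}^x \tilde R^2(t,y)\,dy$ is differentiable in $x$ with $\partial\omega/\partial x = 1+\tilde R^2$, so its inverse yields $\sigma = 1/(1+\tilde R^2)$; symmetrically $\eta = 1/(1+\tilde S^2)$. Since $Y$ is constant along forward characteristics, varying $X$ at fixed $Y$ traces such a characteristic, giving $x_X = (c_2/\alpha)t_X$, and likewise $x_Y = (c_1/\alpha)t_Y$. To tie $t_X$ to $\sigma p$, I would write
\begin{equation*}
x(X,Y) ~=~ \chi\bigl(t(X,Y),\,\Omega(t(X,Y),X)\bigr),
\end{equation*}
where $\chi(\tau,\cdot)$ inverts the energy coordinate $\omega\mapsto x(\tau,\omega)$ at time $\tau$ and $\Omega(\tau,X)$ solves the ODE \eqref{ceq5}. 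A short calculation combining this ODE with the balance law \eqref{balance} (the latter applied to $\partial_\tau\chi$ via the defining relation) produces the clean cancellation $\partial_\tau\chi + \sigma\dot\Omega = c_1/\alpha$, so that differentiating $x(X,Y)$ in $X$ at fixed $Y$ gives $x_X = (c_1/\alpha)t_X + \sigma p$. Combined with $x_X = (c_2/\alpha)t_X$ this yields $t_X = \alpha\sigma p/(c_2-c_1)$ and $x_X = c_2\sigma p/(c_2-c_1)$, hence the claimed $p/x_X = (c_2-c_1)(1+\tilde R^2)/c_2$; the symmetric computation gives $q/x_Y$. For $\xi$, along the forward characteristic the total derivative of $u$ is $u_X = u_t\,t_X + u_x\,x_X = (R/\alpha)t_X = R\sigma p/(c_2-c_1)$, so by \eqref{tvt}, $\xi = R\sigma = R/(1+\tilde R^2)$; analogously $\zeta = S/(1+\tilde S^2)$.

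For part (ii), the formulas just obtained yield $\det D\Lambda = -\alpha\sigma\eta pq/(c_2-c_1)$, so on the critical set $\Omega$ at least one of $\sigma,\eta,p,q$ must vanish at a.e.\ differentiability point. The representation $\sigma = 1/(1+\tilde R^2)$ naturally vanishes where $\mu_-^t$ carries an atom at $x(X,Y)$ (backward energy concentration), and similarly for $\eta$; I would redefine $\sigma,\eta$ on a null set so that they take the value $0$ whenever the corresponding singular measure has mass at $(t(X,Y),x(X,Y))$. The portion of $\Omega$ where exactly one of $\mu_-^t,\mu_+^t$ has an atom at $x(X,Y)$ fibres over a one-dimensional subset of the $(t,x)$-plane and is therefore 2D-null, so after the modification $\sigma = \eta = 0$ at a.e.\ point of $\Omega$; the vanishing of $\xi$ and $\zeta$ then follows from the already-established identities $\xi = R\sigma$ and $\zeta = S\eta$. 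The main technical obstacle is legitimising the chain-rule manipulations at a solution that is only H\"older continuous and whose natural densities $\tilde R^2,\tilde S^2$ may carry singular parts: this requires the weak form of the balance laws \eqref{mbl}, the ODE identification of characteristics from Lemma \ref{un_lem}, and Lemma \ref{tec_lem} to interpret partial derivatives along a.e.\ horizontal and vertical line, following precisely the framework of \cite{BCZ}.
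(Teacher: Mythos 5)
The paper gives no proof of this lemma at all (it is explicitly omitted with a pointer to \cite{BCZ}), so the only meaningful comparison is with the correctness of your argument itself. Your treatment of part (i) is sound and is exactly the BCZ-style computation the authors have in mind: the identity $\partial_\tau\chi+\sigma\dot\Omega=c_1/\alpha$ is just the statement of Lemma \ref{un_lem} that $t\mapsto x(t,\omega(t))$ is the backward characteristic, and combining it with $x_X=(c_2/\alpha)t_X$ reproduces \eqref{sls1} and hence the three claimed quotients; the identification $\sigma=1/(1+\Tilde{R}^2)$ and $u_X=(R/\alpha)t_X$ then give the rest. The remaining work there is only the (acknowledged) legitimisation of chain rules for Lipschitz maps and the verification that a.e.\ point of $\G$ projects to a Lebesgue point of $\Tilde{R}^2(t,\cdot)$, $\Tilde{S}^2(t,\cdot)$ at which $\mu_\pm^t$ carry no atom; that is handled by the area formula \eqref{icv} exactly as in \cite{BCZ}.

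Part (ii), however, contains a genuine gap. From $\det D\Lambda=-\alpha\sigma\eta pq/(c_2-c_1)=0$ and $p,q>0$ you only get $\sigma=0$ \emph{or} $\eta=0$ at a.e.\ point of $\Omega$, and you dispose of the mixed set $\{\sigma=0,\ \eta\neq 0\}$ by asserting that it ``fibres over a one-dimensional subset of the $(t,x)$-plane and is therefore 2D-null.'' That inference is invalid: a positive-measure subset of the $X$-$Y$ plane can perfectly well be mapped by $\Lambda$ into a one-dimensional set (even into a single point) --- this is precisely what happens at an energy concentration, and it is the whole reason $\Omega$ can have positive measure while $V=\Lambda(\Omega)$ is null. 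Concretely, if $\mu_-^t$ carries an atom travelling along a backward characteristic for a whole interval of times (which the balance law \eqref{mbl} permits, with constant mass, and which Definition \ref{def_ec}(iii) allows wherever $\partial_u\lambda_-=0$), then an interval of $X$'s times an interval of $Y$'s sits inside $\{\sigma=0\}$ while $\eta$ need not vanish there. To close the argument you must use the structure of the conservative solution, not just the geometry of $\Lambda$: on $\{\sigma=0,\ \eta\neq0\}$ one has $t_Y\neq 0$, so for a.e.\ fixed $X$ the absolutely continuous map $Y\mapsto t(X,Y)$ has nonzero derivative on the slice and sends it into the set of times at which $\mu_-^t$ is singular at the corresponding point of the backward characteristic; by the Banach indicatrix (area) formula the slice is null \emph{provided} that set of times is null, which is where property (iii) of Definition \ref{def_ec} (or a genuine-nonlinearity hypothesis $\partial_u\lambda_\pm\neq0$) must enter. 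As stated, your proof never invokes this, and without it the simultaneous vanishing $\sigma=\eta=0$ (hence $\xi=\zeta=0$) a.e.\ on $\Omega$ --- which is exactly what step (3) of the proof of Theorem \ref{equ_thm} needs in order to discard the integrals over $\Q\setminus\D$ --- is not established.
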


With the aforementioned preparation in hand, we now turn to prove Theorem \ref{equ_thm}.

\begin{proof}[\bf Proof of Theorem \ref{equ_thm}.]
In the following, we will concentrate on showing that the variables $t,x,u$, $p,q,$ $\eta,\sigma,\xi,\zeta$ in \eqref{pdef}--\eqref{tvt} indeed satisfy the assumptions of Lemma \ref{tec_lem}.
Toward this goal, consider any rectangle $$\Q:=
[X_1,X_2]\times [Y_1,Y_2],$$
 in the $X$-$Y$ plane, For the sake of clarity, we divide this proof into several
steps.

{\bf (1)-Equations for $u$.} First, recall Lemma \ref{nlip_lem} that the function $u$ is Lipschitz continuous
w.r.t.~ $X,Y$.  Hence, from the definitions (\ref{tvt}), it is easy to see that
$$u_X~=~\frac{1}{c_2-c_1} \, \xi p\,,\qquad u_Y~=~\frac{1}{c_2-c_1} \, \zeta q.$$

{\bf (2)-Equations for $x$ and $t$.}  Using Lemma \ref{nlip_lem} and the definitions (\ref{pdef})--(\ref{nedef}), a direct computation gives rise to
\begin{equation*}
\begin{split}
\frac{\partial}{\partial X}x(X,Y)&=\frac{c_2}{c_2-c_1}\frac{\partial}{\partial\bar\omega}x\Big(t(X,Y),\omega(t(X,Y),\bar\omega)\Big)\bigg|_{\bar\omega=X}\\
&=\frac{c_2}{c_2-c_1}\frac{\partial x}{\partial \omega}\Big(t(X,Y),\omega\big(t(X,Y),x(X,Y)\big)\Big)\frac{\partial\omega}{\partial\bar\omega}(t(X,Y),\bar\omega)\bigg|_{\bar\omega=X}\\
&=\frac{c_2}{c_2-c_1}\sigma p.
\end{split}
\end{equation*}
Furthermore, with the similar argument, we are able to get the equation for $x_Y$. On the other hand, it is clear that
\begin{equation*}
t_X~=~\frac{\alpha}{c_2}x_X~=~\frac{\alpha}{c_2-c_1}\sigma p, \qquad t_Y~=~\frac{\alpha}{c_1}x_Y~=~
\frac{\alpha}{c_2-c_1}\eta q.
\end{equation*}

{\bf (3)-Equations for $p$ and $q$.}
At first, we denote a domain in the $X$-$Y$ plane by
\begin{equation*}
\D:=~\Big\{ (X,Y)\,;\quad X\in  [X_1, \, X_2],\quad  Y\in [Y_1,Y_2],\qquad
 \hbox{det}~D\Lambda(X,Y)\neq0\Big\}.\end{equation*}
 Then the integral equation (\ref{uweqn}) and the change of variable formula (\ref{icv}) implies that
\begin{equation*}
\begin{split}
&\quad\int_{X_1}^{X_2}p(X,Y_2)-p(X,Y_1)\,dX =\int_{X_1}^{X_2} \left[\frac{\partial\omega(\tau,X)}{\partial X}
\bigg|_{\tau=\tau(X,Y_2)}-
\frac{\partial\omega(\tau,X)}{\partial X}\bigg|_{\tau=\tau(X,Y_1)}\right]\,dX\\
& =~\int_{X_1}^{X_2} \left[\frac{\partial}{\partial X}\int_{x(X,Y_1)}^{x(X,Y_2)}\int_{\tau(\Tilde  X,Y_1)}^{\tau(\Tilde X,Y_2)} \Big(\frac{\alpha\partial_x c_1-c_1\partial_x\alpha}{\alpha^2}-\frac{2a_1}{\alpha}(R-S)+G\Big)\,dt\,dx\right]d\Tilde X\\
& =\iint_{\Lambda(\D)}\big(\frac{\alpha\partial_x c_1-c_1\partial_x\alpha}{\alpha^2}-\frac{2a_1}{\alpha}(R-S)+G\big)\,dx\,dt\\
 &=\iint_{\D}
\big(\frac{\alpha\partial_x c_1-c_1\partial_x\alpha}{\alpha^2}-\frac{2a_1}{\alpha}(R-S)+G\big)\cdot |\hbox{det} D\Lambda(X,Y)|\, dX\,dY\\
&=\iint_{\D}
\big(\frac{\alpha\partial_x c_1-c_1\partial_x\alpha}{\alpha^2}-\frac{2a_1}{\alpha}(R-S)+G\big)\frac{\alpha pq }{c_2-c_1}
\frac1{1+\Tilde{R}^2}\frac1{1+\Tilde{S}^2}\,dX\,dY\\
&=\iint_\Q\Big[\frac{\alpha\partial_x c_1-c_1\partial_x\alpha}{\alpha(c_2-c_1)}pq\eta\sigma-\frac{2a_1}{c_2-c_1}(\xi\eta-\zeta\sigma)pq
-\frac{2c_2a_1}{c_1(c_2-c_1)}\zeta pq+\frac{2c_2a_1}{c_1(c_2-c_1)}\sigma\zeta pq\\
&\quad-\frac{2c_1a_2}{c_2(c_2-c_1)}\xi pq+\frac{2c_1a_2}{c_2(c_2-c_1)}\xi \eta pq-\frac{2c_1c_2b}{(c_2-c_1)^2}\xi\zeta pq\Big]\,dX\,dY,
\end{split}
\end{equation*}
with $a_1,a_2,b$ and $G$ defined in \eqref{R-S-eqn} and \eqref{balance}, respectively. Here the last equality follows from Lemma \ref{var_lem}, part (i) for the integral over $\D$ and
part (ii) for the integral over $\Q\setminus \D$.
By using the above equality and Lemma \ref{tec_lem}, we obtain
\bel{pYeq}
\begin{split}
p_Y~=~&\frac{\alpha\partial_x c_1-c_1\partial_x\alpha}{\alpha(c_2-c_1)}pq\eta\sigma+
\frac{2(c_1a_2-c_2a_1)}{c_2(c_2-c_1)}\xi\eta pq
+\frac{2a_1(c_1+c_2)}{c_1(c_2-c_1)}\zeta\sigma pq\\
&-\frac{2c_2a_1}{c_1(c_2-c_1)}\zeta pq-\frac{2c_1a_2}{c_2(c_2-c_1)}\xi pq-\frac{2c_1c_2b}{(c_2-c_1)^2}\xi\zeta pq.
\end{split}\eeq
Applying the same procedure, we can derive the equation for $q_X$.

{\bf (4)-Equations for $\eta$ and $\nu$.}
In view of (\ref{pdef}), (\ref{nedef}) and Remark \ref{cha_rem}, we can arrive at
\begin{equation*}
\begin{split}
&\int_{X_1}^{X_2}\Big[p\sigma(X,Y_2)-p\sigma(X,Y_1)\Big]\,dX
~ =~\ds\int_{X_1}^{X_2} \left[ \frac{\partial x(\tau,X)}{\partial X}
\bigg|_{\tau=t(X,Y_2)}-
\frac{\partial x (\tau,X)}{\partial X}\bigg|_{\tau=t(X,Y_1)}\right]\,dX\\
=&\int_{X_1}^{X_2} \left[\frac{\partial}{\partial X}
\int_{x(X,Y_1)}^{x(X,Y_2)}\int_{t(\Tilde X,Y_1)}^{t(\Tilde X,Y_2)} \Big(\frac{\alpha\partial_x c_1-c_1\partial_x\alpha}{\alpha^2}-\frac{2a_1}{\alpha}(R-S)\Big)\,dt\,dx\right]\,d\Tilde X\\
=&\ds\iint_{\Lambda(\D)}\Big(\frac{\alpha\partial_x c_1-c_1\partial_x\alpha}{\alpha^2}-\frac{2a_1}{\alpha}(R-S)\Big)\,dx\,dt\\
=&\iint_{\D}
\Big(\frac{\alpha\partial_x c_1-c_1\partial_x\alpha}{\alpha^2}-\frac{2a_1}{\alpha}(R-S)\Big)\cdot |\hbox{det} D\Lambda(X,Y)|\, dX\,dY\\
=&\iint_{\D}
\Big(\frac{\alpha\partial_x c_1-c_1\partial_x\alpha}{\alpha^2}-\frac{2a_1}{\alpha}(R-S)\Big)
\frac{\alpha pq}{c_2-c_1}\frac1{1+\Tilde{R}^2}\frac1{1+\Tilde{S}^2} \,dX\,dY\\
=&\iint_{\Q}\big[\frac{\alpha\partial_x c_1-c_1\partial_x\alpha}{\alpha(c_2-c_1)}pq\eta\sigma-\frac{2a_1}{c_2-c_1}(\xi\eta-\zeta\sigma)pq\big]\, dX\,dY.
\end{split}
\end{equation*}
Thus, the
above equality and Lemma \ref{tec_lem} yields
\begin{equation*}
(p\sigma)_Y~=~\frac{\alpha\partial_x c_1-c_1\partial_x\alpha}{\alpha(c_2-c_1)}pq\eta\sigma-\frac{2a_1}{c_2-c_1}(\xi\eta-\zeta\sigma)pq,
\end{equation*}
which together with the equation (\ref{pYeq}) for $p_Y$,
one has the following
equality after a complicated computation
\begin{equation*}
\begin{split}
\sigma_Y=&\frac{\alpha\partial_x c_1-c_1\partial_x\alpha}{\alpha(c_2-c_1)}\eta\sigma(1-\sigma)q
+\frac{2a_1(c_1+c_2)}{c_1(c_2-c_1)}\zeta\sigma(1-\sigma)q\\
&+\frac{2c_1a_2}{c_2(c_2-c_1)}\xi\sigma(1-\eta)q
+\frac{2a_1}{c_2-c_1}\xi\eta(\sigma-1)q+
\frac{2c_1c_2b}{(c_2-c_1)^2}\zeta\xi\sigma q.
\end{split}
\end{equation*}
Similar computation leads to the equation for $\eta_X$

{\bf (5)-Equations for $\xi$ and $\zeta$.} As to the equations for $\xi_Y$ and $\zeta_X$, we turn to establish the distributional derivative $u_{XY}$ firstly.
In light of Lemma \ref{tec_lem}, we indeed want to seek a function $f\in \L^1_{loc}(\R^2)$ such that
\begin{equation*}
[u(X_2, Y_1) - u(X_1, Y_1)]-[u(X_2, Y_2) - u(X_1, Y_2)] ~
=~\int_{X_1}^{X_2}\int_{Y_2}^{Y_1} f(X,Y)\, dX \,dY,\end{equation*}
for any values $X_1<X_2$ and $Y_1> Y_2$.
However, to obtain this equation, one need more subtle estimate on the weak solutions. We will proceed in several steps.
\bigskip
\paragraph{\bf (i).}
In the $t$-$x$ plane, we define the image of points $(X_k,Y_k), k=1,2,3,4$ under the map $\Lambda$ by (c.f. Fig.~ \ref{f:hyp203})
\begin{equation*}\label{Pidef}
\bega{l}
P_1\:= (t_1,x_1)=\Lambda(X_1,Y_1), \qquad P_2\:=(t_2,x_2)=\Lambda(X_2,Y_1),\cr\cr
P_3\:= (t_3,x_3)=\Lambda(X_1,Y_2), \qquad P_4\:=(t_4,x_4)=\Lambda(X_2,Y_2),
\enda\end{equation*}
with
\begin{itemize}
\item Backward characteristic $t\mapsto x_1^-(t)$ passing through $P_1$, $P_3$. (Corresponding to $X=X_1$).
\item Backward characteristic $t\mapsto x_2^-(t)$ passing through $P_2$, $P_4$. (Corresponding to $X=X_2$).
\item Forward characteristic $t\mapsto x_1^+(t)$ passing through $P_1$, $P_2$. (Corresponding to $Y=Y_1$).
\item Forward characteristic $t\mapsto x_2^+(t)$ passing through $P_3$, $P_4$. (Corresponding to $Y=Y_2$).
\end{itemize}
\begin{figure}[htbp]
\centering
\includegraphics[width=0.55\textwidth]{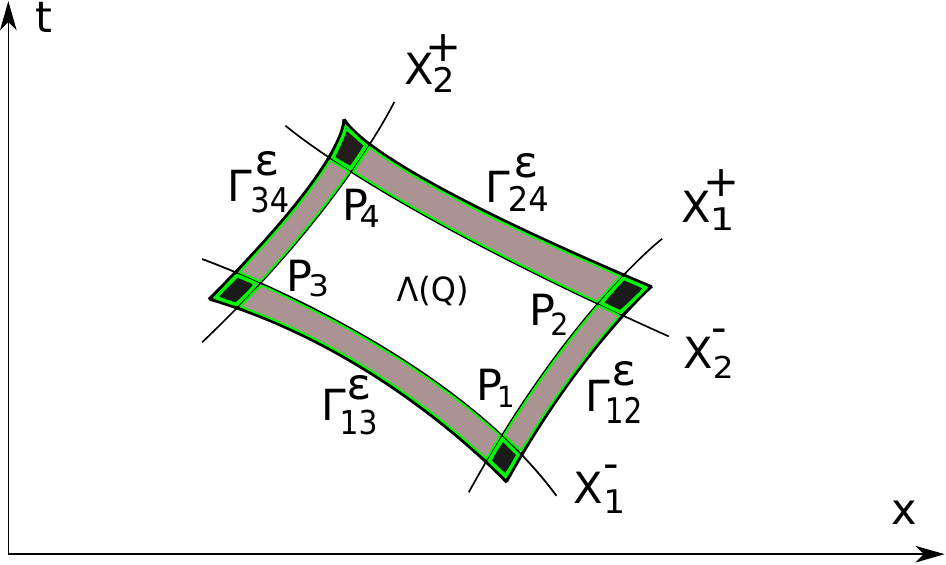}
\includegraphics[width=0.4\textwidth]{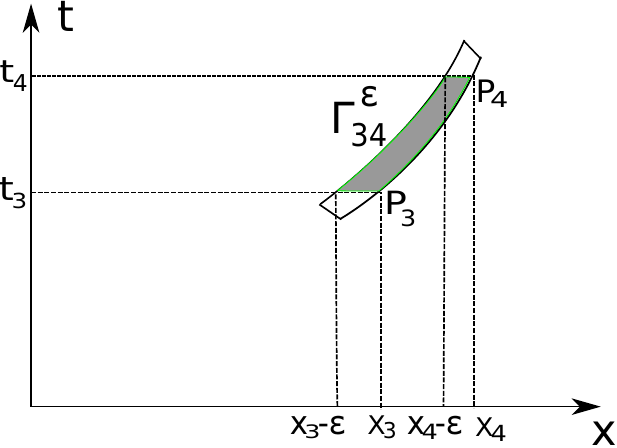}
\caption{\small   Left: The support of the test function $\phi^\epsilon$ in (\ref{pen11}). Right: An enlarged picture of $\Gamma_{34}^\ve$, which is used in \eqref{lime2} where we only do the calculation in the shaded region, becuase the unshaded region can be omitted as $\ve\rightarrow 0$.}
\label{f:hyp203}
\end{figure}

We investigate a family of test functions $\phi^\epsilon$ approaching the
characteristic function of the set $\Lambda(\Q)$, where $\Q\:= [X_1,X_2]\times [Y_2,Y_1]$.
More precisely, set
 \bel{pen11}\phi^{\epsilon}(s,y)~\:=~\min\{ \varrho^{\epsilon}(s,y),
\,\varsigma^\epsilon(s,y)\},\eeq
where
\bel{testf1}\varrho^{\epsilon}(s,y)~\:=~\left\{\bega{cl} 0 \qquad &\hbox{if}\quad y~
\leq ~ x_1^-(s)-\epsilon,\cr
1+\epsilon^{-1}(y-x_1^-(s)) \qquad &\hbox{if}\quad x_1^-(s)-\epsilon~\leq~ y\leq
x_1^-(s),\cr
1\qquad &\hbox{if}\quad x_1^-(s)~ \leq ~y~\leq~ x_2^-(s),\cr
1-\epsilon^{-1}(y-x_2^-(s))\qquad &\hbox{if}\quad  x_2^-(s)~\leq ~
y~\leq~ x_2^-(s)+\epsilon,\cr
0\qquad &\hbox{if}\quad y~\geq ~x_2^-(s)+\epsilon\,,\enda\right.\eeq
and
\bel{testf2}\varsigma^\epsilon(s,y)~\:=~\left\{\bega{cl} 0 \qquad &\hbox{if}\quad y
~\leq~  x_2^+(s)-\epsilon,\cr
1-\epsilon^{-1}(y-x_2^+(s)) \qquad &\hbox{if}\quad x_2^+(s)-\epsilon~\leq~ y~\leq~
x_2^+(s),\cr
1\qquad &\hbox{if}\quad x_2^+(s)~ \leq~ y~\leq~ x_1^+(s),\cr
1+\epsilon^{-1}(y-x_1^+(s))\qquad &\hbox{if}\quad  x_1^+(s)~\leq~ y~\leq ~
x_1^+(s)+\epsilon,
\cr
0\qquad &\hbox{if}\quad y~\geq~ x_1^+(s)+\epsilon\,.\enda\right.\eeq
In view of $(\ref{R-S-eqn})_1$, it is easy to see that
\begin{equation*}
\begin{split}
\iint R\Big[(\alpha\varphi)_t +(c_1\varphi)_x\Big] \,dx\,dt~=&~
-\iint \big[a_1 R^2-(a_1+a_2)RS+a_2S^2+c_2bS-d_1R\big]\varphi\, dx\,dt\\
\end{split}\end{equation*}
for every test function $\varphi\in \C^1_c(\R^2)$.  By the conditions \eqref{con},
we are able to choose a sequence of test functions $\vp_n$ such that,
$$\vp_n~\to~\frac{\phi^\epsilon}{\alpha(c_2-c_1)}\quad \hbox{in}~~H^1(\mathbb{R}^2),\quad {\rm as }\quad n\to\infty,$$
with $\phi^\epsilon$ defined in \eqref{pen11}. Taking the limit, we have
 \begin{equation*}
\begin{split}
&\iint R\big[ \big( {\phi^\epsilon\over c_2-c_1}\big)_t+ (\frac{c_1\phi^\epsilon}{\alpha(c_2-c_1)})_x\big]\,dx\,dt\\
=&
-\iint [a_1 R^2-(a_1+a_2)RS+a_2S^2+c_2bS-d_1R]\frac{\phi^\epsilon}{\alpha(c_2-c_1)}\, dx\,dt.
\end{split}\end{equation*}
By a elaborate calculation, it holds that
 \begin{equation}\label{R+S}
\begin{split}
&\iint {R\over \alpha(c_2-c_1)}( \alpha\phi^\epsilon_t+ c_1\phi^\epsilon_x)\,dx\,dt\\
=&
\iint \big[a_1 R^2+(a_1-a_2+\frac{\partial_u\alpha}{\alpha})RS-a_2S^2-c_2bS-(d_1-\partial_x c_1)R\big]\frac{\phi^\epsilon}{\alpha(c_2-c_1)} \, dx\,dt.\\
\end{split}\end{equation}

\paragraph{\bf (ii).}
By the definition of $\phi^\epsilon$ in \eqref{pen11}--\eqref{testf2}, we see that $\alpha\phi^\epsilon_t+ c_1\phi^\epsilon_x$ is supported on a small neighborhood of the boundary of $\Lambda(\mathcal{Q})$. Thus, in what follows, we will focus on the four boundary strips $\Gamma^\epsilon_{12}$, $\Gamma^\epsilon_{13}$, $\Gamma^\epsilon_{24}$ and $\Gamma^\epsilon_{34}$ of the support of $\phi^\epsilon$ in Fig.~\ref{f:hyp203}.
For example, $\Gamma^\epsilon_{34}$ is the strip enclosed by $x_2^+(t)-\epsilon$, $x_2^+(t)$, $x_1^-(t)-\ve$
and $x_2^-(t) +\epsilon$.
These sets  overlap near the points $P_i= (t_i, x_i)$, $i=1,2,3,4$.
Moreover, each of these intersections is contained in a ball of radius $\O(\ve)$.
For example, $\Gamma_{12}^\epsilon\cap \Gamma_{13}^\epsilon~\subset~B(P_1, K\epsilon),
$ for some constant $K$ and all $\epsilon>0$. We begin with some estimates on theses intersections, which basically says that these regions can be omitted.
Indeed, one has
\begin{equation*}
\begin{split}
&\quad\lim\limits_{\epsilon\to 0}\left|\iint_{\Gamma_{12}^\epsilon\cap\Gamma_{13}^\epsilon}\frac {R}{\alpha(c_2-c_1)}(\alpha\phi_t^\epsilon+c_1\phi^\epsilon_x)\,dx\,dt\right|~
 \leq ~\ds\lim\limits_{\epsilon\to 0}\mathcal{O}(1)\cdot\frac{1}{ \epsilon} \iint_{B(P_1, K\epsilon)}|R|\, dx\,dt
\\
& \leq ~\mathcal{O}(1)\cdot\lim\limits_{\epsilon\to 0}\frac{1}{ \epsilon}
\int_{t_1-K\epsilon}^{t_1+K\epsilon}
\Big(\int_{x_1-K\epsilon}^{x_1+K\epsilon}  R^2(t,x) dx\Big)^{1/2}
(2K\epsilon)^{1/2} \, dt\\
&\leq ~ \mathcal{O}(1)\cdot\lim\limits_{\epsilon\to 0} \frac{1}{ \epsilon}
E_0^{1/2}(2K\epsilon)^{3/2}=0.
\end{split}
\end{equation*}
Applying the same procedure for the other three intersections, we can get
\bel{thus}
\begin{split}
&\lim\limits_{\epsilon\to 0}\iint {R\over \alpha(c_2-c_1)}({\alpha\phi^\epsilon_t+c_1\phi^\epsilon_x})\,
dx\,dt\\
&=~\lim\limits_{\epsilon\to 0}\iint_{\Gamma^\epsilon_{12}\cup\Gamma^\epsilon_{13}\cup\Gamma^\epsilon_{24}\cup\Gamma^\epsilon_{34}}{R\over \alpha(c_2-c_1)}({\alpha\phi^\epsilon_t+c_1\phi^\epsilon_x})\,dx\,dt\,\\
&=~\lim\limits_{\epsilon\to 0}\left(\iint_{\Gamma^\epsilon_{12}}+\iint_{\Gamma^\epsilon_{13}}+\iint_{
\Gamma^\epsilon_{24}}+\iint_{\Gamma^\epsilon_{34}}\right)
{R\over \alpha(c_2-c_1)}({\alpha\phi^\epsilon_t+c_1\phi^\epsilon_x})\,dx\,dt\,.
\end{split}\eeq

\paragraph{\bf(iii).}
Now, it remains to bound the integral over the four boundary strips. As for  the integral over $\Gamma_{13}^\epsilon$, the Cauchy's inequality and  the definition of $\phi^\epsilon$ in \eqref{pen11}--(\ref{testf2}) implies
\bel{13Ge}
\begin{split}
&~\quad\lim\limits_{\epsilon\to 0}\iint_{\Gamma^\epsilon_{13}}
{R\over \alpha(c_2-c_1)}({\alpha\phi^\epsilon_t+c_1\phi^\epsilon_x})\,dx\,dt\,\\
&=~\lim_{\epsilon\to 0}\iint_{\Gamma_{13}^\epsilon}\frac{R(t,x)}{\alpha(c_2-c_1)}
\cdot \frac{-c_1\big(x_1^-(t),u(t,x_1^-(t))\big)+c_1\big(x,u(t,x)\big)}{\epsilon}\, dx\,dt\\
&\leq~ \O(1)\cdot\lim\limits_{\epsilon\to0}\iint_{\Gamma_{13}^\epsilon}  \frac{|-c_1\big(x_1^-(t),u(t,x_1^-(t))\big)+c_1\big(x,u(t,x)\big)|}{\epsilon} |R(t,x)|dx\,dt\\
&\leq~\O(1)\cdot \lim\limits_{\epsilon\to0}\frac{1}{\epsilon}\int_{0}^{T} \int_{0\leq x^-_1(t)-x\leq \ve} \big[|x^-_1(t)-x|+|x^-_1(t)-x|^{1/2}\big]\, |R(t,x)|\, dx\,dt \\
&\leq~ \O(1)\cdot\lim\limits_{\epsilon\to0}\frac{1}{\epsilon}\int_{0}^{T}  \left(\int_{0\leq x^-_1(t)-x\leq\ve}|R(t,x)|^2\, dx
\right)^{1/2}(\epsilon^\frac{3}{2}+\epsilon)\,dt=0\,.
\end{split}
\eeq
Repeating this argument for the integral over $\Gamma_{24}^\epsilon$, we have
\bel{iGe2}
\lim_{\epsilon\to 0}~\iint_{\Gamma_{24}^\epsilon}
{R\over \alpha(c_2-c_1)}({\alpha\phi^\epsilon_t+c_1\phi^\epsilon_x})\,dx\,dt ~=~0\,.\eeq
Next, we turn to the integral over $\Gamma_{12}^\epsilon$, it holds that
\begin{equation}
\begin{split}
&~\quad\lim_{\epsilon\to 0}~\iint_{\Gamma_{12}^\epsilon}
{R\over \alpha(c_2-c_1)}({\alpha\phi^\epsilon_t+c_1\phi^\epsilon_x})\,dx\,dt\\
&=~\lim_{\epsilon\to 0}\iint_{\Gamma_{12}^\epsilon}\frac{R(t,x)}{\alpha(c_2-c_1)}
\cdot \frac{-c_2\big(x_1^+(t),u(t,x_1^+(t))\big)+c_1\big(x,u(t,x)\big)}{\epsilon}\, dx\,dt\\
& =~\ds \lim\limits_{\epsilon\to0}\iint_{\Gamma_{12}^\epsilon}  \frac{R(t,x)}{\alpha(c_2-c_1)}
\frac{-c_2\big(x_1^+(t),u(t,x_1^+(t))\big)+c_2\big(x,u(t,x)\big)}{\epsilon} dx\,dt\\
&\quad\quad-\lim_{\epsilon\to 0}\iint_{\Gamma_{12}^\epsilon}
\frac{R(t,x)}{\alpha(c_2-c_1)}  \,\frac{c_2\big(x,u(t,x)\big)-c_1\big(x,u(t,x)\big)}{\epsilon}\, dx\,dt\\
& =~\ds -\lim_{\epsilon\to0}\frac1\epsilon\iint_{\Gamma_{12}^\epsilon}\frac{R(t,x)}{\alpha\big(x,u(t,x)\big)}\,dx\,dt\,,
\end{split}
\end{equation}
the last equality follows from the same argument as in \eqref{13Ge}. Similarly, for the integral over $\Gamma_{34}^\epsilon$, we can derive
\bel{iGe}
\lim_{\epsilon\to 0}~\iint_{\Gamma_{34}^\epsilon}
{R\over\alpha(c_2-c_1)}({\alpha\phi^\epsilon_t+c_1\phi^\epsilon_x})\,dx\,dt ~=~\ds \lim_{\epsilon\to0}\frac1\epsilon\iint_{\Gamma_{34}^\epsilon}{R(t,x)\over \alpha\big(x,u(t,x)\big)}\,dx\,dt\,.\eeq
Moreover, we can get from the fact $\bigl\{a_1 R^2+(a_1-a_2+\frac{\partial_u\alpha}{\alpha})RS-a_2S^2-c_2bS-(d_1-\partial_x c_1)R \bigr\}\in \L^1_{loc}(\mathbb{R}^2)$ that
\begin{equation}\label{lsource}
\begin{split}
&\quad\lim_{\epsilon\to 0}\iint\big[a_1 R^2+(a_1-a_2+\frac{\partial_u\alpha}{\alpha})RS-a_2S^2-c_2bS-(d_1-\partial_x c_1)R\big]\frac{\phi^\epsilon}{\alpha(c_2-c_1)} \, dx\,dt\\
&~=~\iint_{\Lambda(\Q)}\frac{1}{\alpha(c_2-c_1)}\big[a_1 R^2+(a_1-a_2+\frac{\partial_u\alpha}{\alpha})RS-a_2S^2-c_2bS-(d_1-\partial_x c_1)R\big] \, dx\,dt\,.
\end{split}\end{equation}
Plugging the estimates \eqref{thus}--(\ref{lsource}) into (\ref{R+S}), we then obtain
\begin{equation*}\label{lime2}
\begin{split}
&\qquad\lim_{\epsilon\to0}\frac1\epsilon\iint_{\Gamma_{34}^\epsilon}{R\over\alpha}\,dx\,dt-
\lim_{\epsilon\to0}\frac1\epsilon\iint_{\Gamma_{12}^\epsilon}{R\over\alpha}\,dx\,dt\\
&~=~\iint_{\Lambda(\Q)}\frac{1}{\alpha(c_2-c_1)}\big[a_1 R^2+(a_1-a_2+\frac{\partial_u\alpha}{\alpha})RS-a_2S^2-c_2bS-(d_1-\partial_x c_1)R\big]\,dx\,dt\,.
\end{split}
\end{equation*}

\paragraph{\bf(iv).}
On the other hand, observe that $u\in H^1_{loc}$, we further have that  (Fig.~\ref{f:hyp203}, right)
\begin{equation*}
\begin{split}
u(P_2)-u(P_1)&=~\lim_{\ve\to 0} {1\over\epsilon}
 \left( \int_{x_2-\epsilon}^{x_2}
 u(t_2, y)\, dy -
 \int_{x_1-\epsilon}^{x_1} u(t_1, y)\, dy\right)\\
 &=~\lim_{\epsilon\to 0}
{1\over\epsilon}\dint_{\Gamma_{12}^\epsilon} \Big[u_{t} + \frac{c_2}{\alpha}\big(x_1^+(t),u(t, x_1^+(t))\big)u_{x}
\Big]\, dx\,dt\\
&=~
\lim_{\epsilon\to 0}{1\over\epsilon}
\dint_{\Gamma_{12}^\epsilon} \Big[{\alpha u_{t} + c_2(x,u(t, x))u_{x}\over \alpha}\Big]\, dx\,dt\\
&=~
\lim_{\epsilon\to 0}{1\over\epsilon}
\dint_{\Gamma_{12}^\epsilon} {R\over \alpha}\, dx\,dt\,.
\end{split}
\end{equation*}
Using a very similar argument for $u(P_4)-u(P_3)$, we thus conclude
\bel{es5}
\begin{split}
&\quad[u(P_4)-u(P_3)]-[u(P_2)-u(P_1)]\\
&=~\iint_{\Lambda(\Q)}\frac{1}{\alpha(c_2-c_1)}\big[a_1 R^2+(a_1-a_2+\frac{\partial_u\alpha}{\alpha})RS-a_2S^2-c_2bS-(d_1-\partial_x c_1)R\big]\, dx\,dt\,.
\end{split}
\eeq
Here, in view of Remark \ref{cha_rem}, we can write the right hand side of (\ref{es5}) as an integral w.r.t.~the variables $X,Y$, that is
\begin{equation*}
\begin{split}
&\qquad [u(X_2, Y_2) - u(X_1, Y_2)]-[u(X_2, Y_1) -u(X_1, Y_1)] \\
& =~ \iint_{\Q\cap \G }\frac{1}{(c_2-c_1)^2}\big[a_1 R^2+(a_1-a_2+\frac{\partial_u\alpha}{\alpha})RS-a_2S^2-c_2bS\big]\cdot \frac{p}{1+\Tilde{R}^2}\frac{q}{1+\Tilde{S}^2}\,dXdY\\
&\quad -\iint_{\Q\cap \G }\frac{(d_1-\partial_x c_1)R}{(c_2-c_1)^2}\cdot \frac{p}{1+\Tilde{R}^2}\frac{q}{1+\Tilde{S}^2}\,dXdY\,.\end{split}\end{equation*}
Hence, by using Lemma  \ref{tec_lem}, we konw that the weak derivative $u_{XY}$
exists and if $\det D\Lambda(X,Y)\neq0$, then
\begin{equation*}
\begin{split}
u_{XY}(X,Y)~=~&-\frac{1}{(c_2-c_1)^2}\big[a_1 R^2+(a_1-a_2+\frac{\partial_u\alpha}{\alpha})RS-a_2S^2-c_2bS\big]\cdot \frac{p}{1+\Tilde{R}^2}\frac{q}{1+\Tilde{S}^2}\\
&+\frac{(d_1-\partial_x c_1)R}{(c_2-c_1)^2}\cdot \frac{p}{1+\Tilde{R}^2}\frac{q}{1+\Tilde{S}^2},
\end{split}\end{equation*}
if $\det D\Lambda(X,Y) =0$, then
$$u_{XY}(X,Y)~=~0.$$
Therefore, a
direct calculation implies that
\bel{xpY}\begin{split}
u_{XY}~=~&\big(\frac{a_1}{c_1}(\eta-\sigma\eta
)+\frac{a_2}{c_2}(\sigma-\sigma\eta)\big)\frac{pq}{c_2-c_1}
-(a_1-a_2+\frac{\partial_u\alpha}{\alpha})\frac{\xi\zeta pq}{(c_2-c_1)^2}\\
&+\big(c_2b\sigma\zeta+(d_1-\partial_x c_1)\eta\xi\big)\frac{pq}{(c_2-c_1)^2}\,.
\end{split}\eeq

\paragraph{\bf(v).}
With the above preparation in hand, we are ready to derive the equation for $\xi_{Y}$. By Lemma \ref{tec_lem}, we see that, for a.e.~$X$, the map
$Y\mapsto  u_{X}(X,Y)~=~\frac{\xi p}{c_2-c_1}(X,Y)$ is absolutely continuous and its derivative is
given by (\ref{xpY}).
In view of the equations for $p_Y, u_Y$ and $x_Y$, and the fact that $p$ remains uniformly positive on bounded sets,
one has the following
equality after a complicated computation
\begin{equation*}
\begin{split}
\xi_{Y}~=~&\frac{a_1q}{c_1}(\eta-\sigma\eta)+\frac{a_2q}{c_2}(\sigma-\sigma\eta)
+(a_1-a_2+\frac{2c_2a_1}{c_1})\frac{\xi\zeta q}{c_2-c_1}+\frac{c_2b}{c_2-c_1}\sigma\zeta q\\
&+(d_1+\frac{c_1\partial_x c_2-c_2\partial_xc_1}{c_2-c_1})\frac{\eta\xi q}{c_2-c_1}-\frac{\alpha\partial_x c_1-c_1\partial_x\alpha}{\alpha(c_2-c_1)}\eta\xi\sigma q-\frac{2a_1(c_1+c_2)}{c_1(c_2-c_1)}\xi\sigma\zeta q\\
&-\frac{2(c_1a_2-c_2a_1)}{c_2(c_2-c_1)}\xi^2\eta q+\frac{2c_1a_2}{c_2(c_2-c_1)}\xi^2 q+\frac{2c_1c_2b}{(c_2-c_1)^2}\xi^2\zeta q.
\end{split}\end{equation*}
The equation for $
\zeta_{X}$ can be established in a similar argument. This completes the proof of Theorem \ref{equ_thm}. \end{proof}

\section{Proof of Theorem \ref{thm_un}}

Now we are ready to prove Theorem \ref{thm_un} on the uniqueness of conservative
solutions to the system (\ref{vwl})--\eqref{ID}.
Let initial data $u_{0}\in H^1(\mathbb{R})$, $u_{1}\in\L^2(\mathbb{R}) $ be given.
These data uniquely determine a curve $\gamma$
in the $X$-$Y$ plane, parameterized by
$$X(x):= ~x+ \int_{-\infty}^x \Tilde{R}^2(0,y)\, dy\,,\qquad\qquad
Y(x):= ~x+ \int_{-\infty}^x \Tilde{S}^2(0,y)\, dy.$$
At the point $(X(x), Y(x))\in\gamma$, we have
$$\left\{\bega{rl}  t&=0  ,\cr x&=~x\,,\enda\right.\qquad\qquad
\left\{\bega{rl}  u&=~u_{0}(x),\cr p&=~q~=~1\,,\enda\right.
$$
$$\left\{\bega{rl}
\sigma &=~{1\over 1+\Tilde{R}^2(0,x)}\,,\cr   \eta&=~{1\over 1+\Tilde{S}^2(0,x)}\,,
\enda\right.
\qquad\qquad \left\{\bega{rl}\xi&=~{R(0,x)\over 1+\Tilde{R}^2(0,x)}\,,\cr
\zeta&=~
{S(0,x)\over 1+\Tilde{S}^2(0,x)}\,,\enda\right.$$
with
$$R(0,x)=\alpha\big(x,u_0(x)\big) u_1(x) + c_2\big(x,u_{0}(x)\big) u_{0,x},\quad
S(0,x)=\alpha\big(x,u_0(x)\big) u_1(x) + c_1\big(x,u_{0}(x)\big) {u}_{0,x},$$
and
$$\Tilde{R}^2(0,x)=\frac{-c_1}{c_2-c_1}\big(x,u_0(x)\big)R^2(0,x),\quad \Tilde{S}^2(0,x)=\frac{c_2}{c_2-c_1}\big(x,u_0(x)\big) S^2(0,x).$$
By using an argument analog to the one in \cite{H} for a semi-linear system,
we can obtain a unique solution $(t,x,u$, $p,q,$ $\sigma,\eta,\xi,\zeta)$ of the semi-linear system (\ref{sls1})--(\ref{sls4}) with the above boundary data along $\gamma$ in the $X$-$Y$ plane. Moreover, the functions
$(X,Y)~\mapsto~(x,t,u)(X,Y)$
are  uniquely determined, up to a set of zero measure in the $X$-$Y$ plane.
Since the map  $(t,x)\mapsto u(t,x)$ is continuous,
we thus conclude that $u(t,x)$ is  uniquely determined, pointwise in the $x$-$t$ plane.
This completes the proof of  Theorem \ref{thm_un}.
\endproof

%

\section*{Acknowledgements}
The first author is supported by the National Natural Science Foundation of China (No. 11801295) and the Shandong Provincial Natural Science Foundation, China (No. ZR2018BA008).  The second author is partially
supported by National Science Foundation with grants DMS-1715012 and DMS-2008504. The third author is supported by the National Natural Science Foundation of China (No. 11971199) and Guandong Provincial Natural Science Foundation of China (No. 2020B1515310012)

%


\begin{thebibliography}{99}
 \bibitem{AH2007} G. Ali and J. K. Hunter, Diffractive nonlinear geometrical optics for variational wave equations and the Einstein equations, {\it Comm. Pure Appl. Math.}, {\bf 60} (2007), 1522--1557.

\bibitem{AH} G. Ali and J. Hunter, Orientation waves in a director field
with rotational inertia, {\it Kinet. Relat. Models}, {\bf 2} (2009), 1--37.



\bibitem{B1}
A.~Bressan, Uniqueness of conservative solutions for nonlinear wave equations via characteristics, {\it Bull. Braz. Math. Soc.} {\bf 47}(1) (2016), 157--169.


\bibitem{BC}
A. Bressan and G. Chen,  Generic regularity of conservative solutions to a nonlinear wave equation, {\it  Ann. Inst. H.~Poincar\'e Anal. Non Lin\'eaire} {\bf 34}(2) (2017), 335--354.

\bibitem{BC2015}
A. Bressan and G. Chen, Lipschitz metrics for a class of nonlinear wave equations,  {\it Arch. Ration. Mech. Anal.} {\bf 226}(3) (2017), 1303--1343.


\bibitem{BCZ2} A. Bressan, G. Chen and Q. Zhang, Uniqueness of conservative solutions to the Camassa-Holm equation via characteristics, {\it Discr. Cont. Dynam. Syst.} {\bf 35} (2015), 25--42.

\bibitem{BCZ} A. Bressan, G. Chen and Q. Zhang,
Unique conservative solutions to a variational wave equation, {\it Arch. Ration. Mech. Anal.} {\bf 217} (3) (2015), 1069--1101.

 \bibitem{BZ}
A.~Bressan and Y.~Zheng,
Conservative solutions to a nonlinear variational wave equation,
{\it Comm. Math. Phys.} {\bf 266} (2006), 471--497.

\bibitem{BH}A.~Bressan and T.~Huang,
Representation of dissipative solutions to a nonlinear variational wave equation,
{\it Comm. Math. Sci.} {\bf 14} (2016), 31--53.

\bibitem{BHY}
A.~Bressan, T.~Huang  and F.~Yu,
Structurally stable singularities for a nonlinear wave equation, {\it Bull. Inst. Math.
Acad. Sinica.} {\bf 10}(4) (2015), 449--478.


\bibitem{CCD} H. Cai, G. Chen and Y. D, Uniqueness and regularity of conservative solution to a wave system modeling nematic liquid crystal, {\it J. Math. Pures Appl.} {\bf 117} (2018), 185--220.

\bibitem{CCS} H. Cai, G, Chen and Y. Shen,  A Finsler type Lipschitz optimal transport metric for a quasilinear wave equation, submitted, available at arXiv:2007.15201.

 \bibitem{CZ12} G. Chen and Y. Zheng,
Singularity and existence for a wave system of nematic liquid crystals, {\it J. Math. Anal. Appl.} {\bf 398} (2013), 170--188.

\bibitem{CZZ12} G. Chen, P. Zhang and Y. Zheng,
 Conservation solutions to a system of variational wave equations of nematic liquid crystals,
{\it Commun. Pure Appl. Anal. } {\bf 12}(3) (2013), 1445--1468.

\bibitem{GHZ} R.T. Glassey, J.K. Hunter and Y. Zheng, Singularities in a nonlinear variational wave equation, {\it J. Differential. Equations}
{\bf 129}, (1996),  49--78.

\bibitem{GHZ2} R.T. Glassey,  J.K. Hunter and  Y. Zheng, Singularities and oscillations in a nonlinear variational wave equation. Singularities and Oscillations, edited by J. Rauch, M.E. Taylor, (eds.) IMA, Vol. 91, Springer, 1997.



\bibitem{HR} H.~Holden and X.~Raynaud, 
Global semigroup of conservative solutions of the nonlinear variational wave equation.
{\em Arch. Ration. Mech. Anal.} {\bf 201} (2011),  871-964.

\bibitem{H} Y.B. Hu, Conservative solutions to a one--dimensional nonlinear variational wave equation, {\it J. Differential Equations} {\bf 259} (2015), 172--200.




\bibitem{RS} I. Rodnianski and J. Sterbenz, On the formation of singularities in the critical $O(3)\sigma$-model, {\it Ann. of Math.} {\bf 172} (2010), 187--242.

\bibitem{S} R.A. Saxton, Dynamic instability of the liquid crystal director, In: Contemporary Mathematics, Vol. 100: Current Progress in Hyperbolic Systems, ed. W.B. Lindquist, Providence RI: AMS, 1989, 325-¨C330.

    \bibitem{ZZ03} P. Zhang and Y. Zheng,
Weak solutions to a nonlinear variational wave equation,
{\em Arch. Ration. Mech. Anal.} {\bf 166} (2003), 303--319.

    \bibitem{ZZ10} P. Zhang and Y. Zheng, Conservative solutions to
a  system of variational wave equations of nematic liquid crystals,
{\em Arch. Ration. Mech. Anal.} {\bf 195} (2010), 701--727.

\bibitem{ZZ11} P. Zhang and Y. Zheng,
Energy conservative solutions to a one-dimensional full variational wave system,
{\em Comm. Pure Appl. Math.} {\bf 55} (2012), 582--632.

\bibitem{Zi} W.P. Ziemer, Weakly Differentiable Functions. Sobolev Spaces and Functions of Bounded Variation, Grad. Texts Math., vol.120, Springer-Verlag, New York, 1989.

\end{thebibliography}
\end{document}